\newcommand{\mes}{\operatorname{mes}}
\newcommand{\bigzero}{\text{\rm\Large0}}
\newtheorem{theorem}{Theorem}
\newtheorem{lemma}{Lemma}
\title{Distribution of real algebraic integers
}
\author{Denis V. Koleda}
\date{}
\begin{document}
\maketitle

\begin{abstract}
In the paper, we study the asymptotic distribution of real algebraic integers of fixed degree as their naive height tends to infinity. Let $I \subset \mathbb{R}$ be an arbitrary bounded interval, and $Q$ be a sufficiently large number. We obtain an asymptotic formula for the number of algebraic integers $\alpha$ of fixed degree $n$ and naive height $H(\alpha)\le Q$ lying in $I$. In this formula, we estimate the order of the error term from above and below. We show that the real algebraic integers of degree $n$ are distributed asymptotically like the real algebraic numbers of degree $(n-1)$ as the upper bound $Q$ of heights tends to infinity. 
\end{abstract}

\section{Introduction and main results}

In this paper we investigate the distribution of algebraic integers of arbitrary fixed degree.
We establish an asymptotic formula counting algebraic integers in an arbitrary interval (Theorem \ref{thm-main} below).
In fact, we prove that the algebraic integers of degree $n$ and height at most $Q$ tend to be distributed in the real line almost like the algebraic numbers of degree $(n-1)$ with the same heights as $Q\to \infty$ (Theorem \ref{thm-limit} below). The scheme \cite{Ka15} of a proof was published in 2015.
Note that for measuring algebraic integers we use the naive height
called simply \emph{height}.

Let $p(x) = a_n x^n + \ldots + a_1 x + a_0$ be a polynomial of degree $n$,
and let $H(p)$ be its height defined as $H(p) = \max_{0\le i\le n} |a_i|$.
Let $\alpha\in\mathbb{C}$ be an algebraic number. The \emph{minimal polynomial} of $\alpha$ is defined as a nonzero polynomial~$p$ of the minimal degree $\deg(p)$ with integral coprime coefficients and positive leading coefficient such that $p(\alpha)=0$.
For the algebraic number $\alpha$, its degree $\deg(\alpha)$ and its height $H(\alpha)$ are defined as the degree and the height of the corresponding minimal polynomial.
An algebraic number is called an \emph{algebraic integer} if its minimal polynomial is monic, that is, has the leading coefficient 1.

We denote by $\# S$ the number of elements in a finite set $S$,
$\mes_{k} S$ denotes the $k$--dimensional Lebesgue measure of a set $S \subset \mathbb{R}^{d}$ ($k \le d$).
The length of an interval $I$ is denoted by $|I|$.
The Euclidean norm of a vector $\mathbf{x}\in\mathbb{R}^k$ is denoted by $\|\mathbf{x}\|$.
To denote asymptotic relations between functions, we use the Vinogradov symbol $\ll$:
expression $f \ll g$ denotes that $f \le c\,g$, where $c$ is a constant depending on the degree $n$ of algebraic numbers.
Expression $f \asymp g$ is used for asymptotically equivalent functions, that is, $g \ll f \ll g$.
Notation $f \ll_{x_1,x_2,\ldots}\, g$ implies that the implicit constant depends only on parameters $x_1,x_2,\ldots$. Asymptotic equivalence $f \asymp_{x_1,x_2,\ldots}\, g$ is defined by analogy.

Some interesting features of the distribution of algebraic integers can be described in terms of regular systems. The idea of regular systems arose in a natural way as a useful tool for calculation of the Hausdorff dimension in the paper by Baker and Schmidt~\cite{BakSch70}, who proved that the real algebraic numbers constitute a regular system. In 1999, Beresnevich~\cite{Bere99} improved parameters of regularity \cite{BakSch70} omitting logarithmic factors. In 2002, Bugeaud~\cite{Bugeaud2002} proved that the real algebraic integers also form a regular system. In simple language, that is, there exists a constant $c_n$ depending on $n$ only such that for any interval $I\subseteq[-1,1]$ for all sufficiently large $Q\ge Q_0(I)$ there exist at least
$c_n |I| Q^n$
algebraic integers $\alpha_1,\dots,\alpha_k \in I$ of degree $n$ and height at most $Q$ such that the distances between them are at least
$Q^{-n}$.

Additionally, there are a number of papers concerning the asymptotic number of algebraic numbers $\alpha$ of given degree $n$ and multiplicative Weil height $\mathcal{H}(\alpha)\le X$ in number field extensions as $X$ tends to infinity.
When the ground field is $\mathbb{Q}$,
the height $\mathcal{H}(\alpha)$ can be expressed in the Mahler measure as $\mathcal{H}(\alpha) = M(\alpha)^{1/n}$.
Such asymptotic formulas are obtained for algebraic numbers by Masser and Vaaler \cite{MasVaa2008}, \cite{MasVaa2007}.
In 2001, Chern and Vaaler \cite[Theorem 6]{ChernVaaler2001} proved asymptotic estimates for the number of integral monic polynomials of degree $n$ having the Mahler measure bounded by $T$, which tends to infinity.
Let $\mathcal{O}_n(\mathbb{Q},X)$ denote the set of algebraic integers $\alpha$ of degree $n$ over $\mathbb{Q}$ and of Weil height $\mathcal{H}(\alpha) \le X$.
From \cite{ChernVaaler2001} we have immediately
\[
\#\mathcal{O}_n(\mathbb{Q},X) = c(n) X^{n^2} + O\!\left(X^{n^2 - 1}\right),
\]
where $c(n)$ is an explicit positive constant; in the big-O-notation the implicit constant depends only on $n$. Note that here $X$ has order of $Q^{1/n}$, where $Q$ is the upper bound for corresponding naive heights.
In 2013, Barroero \cite{Barroero2013} extended this result to arbitrary ground number fields.
Some references for this subject can be found in the book by Lang \cite[chapter 3, \S 5]{Lang1983}.
Note that these results dealing with the Weil height do not overlap with ours.

Besides, there are a series of works about counting units, i.e. invertible elements, in the ring of algebraic integers of a number field. Here we can refer to \cite{GyPe1980}, \cite{EvLo1993} (which use a~height function different from the ones mentioned above), and to \cite[chapter 3, \S 5]{Lang1983}. There one can find further references about invertible elements in rings of algebraic integers and related questions.

In 1971, H.~Brown and K.~Mahler \cite{BroMah1971} proposed a natural generalization of the Farey sequences for algebraic numbers of higher degrees.
This generalization is based on the naive height.
For a long time, the whole picture of the distribution for arbitrary fixed degrees remained unknown even for the real algebraic numbers of the second degree. In 1985, K. Mahler noted this in his letter to V. Sprind\v{z}uk.

For the real algebraic numbers of the second degree, a result of such a type was obtained in \cite{Ka2013-3}, and for an arbitrary fixed degree in \cite{Ka12} and \cite{Ko-v13-1} (a full proof with some generalizations can be found in \cite{Koleda2014a}).
In \cite{Ka12}, the author showed that the number of algebraic numbers of degree $n$ and naive height at most $Q$ lying in an interval $I$ 
equals to
\[
\frac{Q^{n+1}}{2 \zeta(n+1)} \int_I \phi_n(t) \,dt + O\left(Q^n (\ln Q)^{\delta(n)}\right),
\]
where $\zeta(\cdot)$ is the Riemann zeta function;
the function $\phi_n(t)$ is defined by the formula:
\begin{equation}\label{eq-phi-def}
  \phi_n(t) = \int\limits_{G_n(t)} \left|\sum_{k=1}^n k p_k t^{k-1}\right|\,dp_1\ldots\,dp_n, \qquad t \in \mathbb{R},
\end{equation}
where
\begin{equation}\label{eq-G-def}
G_n(t) = \left\{(p_1,\ldots,p_n) \in \mathbb{R}^n : \ \max\limits_{1\le i\le n}|p_i| \le 1, \ \left| \sum_{k=1}^n p_k t^k \right| \le 1 \right\}.
\end{equation}
In the remainder term, the implicit constant in the big-O notation depends on the degree~$n$ only. The power of the logarithm is equal to:
\[
\delta(n) =
\begin{cases}
1, & n\le 2,\\
0, & n\ge 3.
\end{cases} 
\]

Let $\mathcal{O}_n$ denote the set of algebraic integers of degree $n$.
For a set $S \subseteq \mathbb{R}$,
let $\Omega_n(Q,S)$ be the number of algebraic integers $\alpha \in S$ of degree $n$ and height at most $Q$:
\[
\Omega_n(Q,S) := \#\left\{\alpha \in \mathcal{O}_n \cap S : H(\alpha)\le Q \right\}.
\]

Note that the algebraic integers of degree 1 are simply the rational integers, which are nowhere dense in the real line. Therefore, we assume $n\ge 2$.

We prove the following two theorems.
\begin{theorem}\label{thm-main}
For any interval $I\subseteq\mathbb{R}$ we have:
\begin{equation}\label{eq-main}
\Omega_n(Q,I) = Q^n \int_I \omega_n(Q^{-1}, t) \,dt + O\!\left(Q^{n-1}(\ln Q)^{\delta(n)}\right),
\end{equation}
where the function $\omega_n(\xi, t)$ can be written in the form:
\begin{equation}\label{eq-omega-def}
\omega_n(\xi, t) = \int\limits_{D_n(\xi, t)} \left|n\xi t^{n-1} + \sum_{k=1}^{n-1} kp_k t^{k-1} \right| \,dp_1 \dots dp_{n-1},
\end{equation}
with $D_n(\xi, t) = \left\{(p_1,\dots,p_{n-1})\in\mathbb{R}^{n-1} : |p_i|\le 1, \ \left|\xi t^n + \sum_{k=1}^{n-1} p_k t^k \right| \le 1\right\}$.

In the remainder term, the implicit constant depends only on the degree $n$.
Besides, there exist intervals, for which the error of this formula has the order $O(Q^{n-1})$.
\end{theorem}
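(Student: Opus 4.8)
The plan is to count real roots of monic integer polynomials and to evaluate that count by a level-set argument, deferring the discretization and degeneracy issues to a final error analysis. Since every root of a monic polynomial of height at most $Q$ lies in $[-(Q+1),Q+1]$, and the reciprocal substitution $x\mapsto 1/x$ sends the range $|t|\ge 1$ to $|t|\le 1$, it suffices to treat a fixed bounded interval $I$. First I would pass from algebraic integers to polynomials: each $\alpha\in\mathcal{O}_n\cap I$ with $H(\alpha)\le Q$ is a real root of its minimal polynomial $p(x)=x^n+a_{n-1}x^{n-1}+\dots+a_0$, a monic \emph{irreducible} integer polynomial with $\max_i|a_i|\le Q$, and conversely such a polynomial contributes each of its distinct (automatically simple) real roots in $I$. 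Writing $N(Q,I)$ for the analogous sum over \emph{all} monic integer polynomials of degree $n$, I would show $\Omega_n(Q,I)=N(Q,I)+O(Q^{n-1}(\ln Q)^{\delta(n)})$: the discrepancy counts roots of reducible polynomials (these have degree $<n$) and roots of polynomials with repeated factors. Factoring $p=fg$ into monic integer factors and using Gelfond's inequality $H(f)\,H(g)\ll_n H(fg)$ bounds the number of reducible $p$ by $\sum_{d=1}^{n-1}\sum_{h_1h_2\ll Q} h_1^{d}h_2^{\,n-d}$, which is $O(Q^{n-1})$ for $n\ge 3$ and $O(Q\ln Q)$ for $n=2$; roots of multiplicity $\ge 2$ lie on the discriminant locus and are sparser still.

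To evaluate $N(Q,I)$ I would fix $(a_1,\dots,a_{n-1})\in\mathbb{Z}^{n-1}\cap[-Q,Q]^{n-1}$ and put $g(x)=x^n+\sum_{k=1}^{n-1}a_k x^k$, so that $p=g+a_0$ and $g'=p'$. For an integer $a_0$ with $|a_0|\le Q$ the roots of $p$ in $I$ are precisely the points of $g^{-1}(-a_0)\cap I$, so summing over $a_0$ counts the integer values attained by $g$ on $I$. As $g$ has at most $n-1$ critical points, on each of its $O(n)$ monotone branches the number of attained integers equals the length of the image up to $O(1)$, giving
\[
\sum_{a_0\in\mathbb{Z},\ |a_0|\le Q}\#\{x\in I:\ p(x)=0\}=\int_{\{x\in I\,:\,|g(x)|\le Q\}}|g'(x)|\,dx+O(1).
\]
Summing over the $(n-1)$-dimensional lattice and comparing with the integral of the right-hand side over the box $[-Q,Q]^{n-1}$, I would exchange the order of integration, fix $x=t$, and rescale $a_k=Qp_k$; the inner integral then becomes exactly $Q^n\omega_n(Q^{-1},t)$, so the main term equals $Q^n\int_I\omega_n(Q^{-1},t)\,dt$. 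The $O(1)$ per tuple already contributes a total of $O(Q^{n-1})$.

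The core of the remainder is the passage from the lattice sum to the box integral. Writing $F(a_1,\dots,a_{n-1})$ for the inner integral, one checks $F\ll_I Q$ and that its partial derivatives are $O(1)$ \emph{away from} the locus where the level curves $\{g=\pm Q\}$ become tangent to the $x$-axis, i.e. where the relevant root has abnormally small $|g'|=|p'|$; on this bulk a multidimensional Euler--Maclaurin (Koksma-type) estimate gives an error of order $Q^{n-1}$, with an extra factor $\ln Q$ when $n\le 2$. I expect the real obstacle to be precisely the degeneracy along this near-multiple-root locus, where the substitution $a_0\mapsto(\text{root})$ collapses and $\partial F/\partial a_j$ is unbounded; these terms must be controlled separately by counting polynomials whose derivative at a root in $I$ is small, and this is the step that fixes the final shape of the remainder and produces the logarithm in low degrees.

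Finally, for the sharpness assertion I would exhibit one interval on which a genuine second-order term of size $Q^{n-1}$ persists. The natural source is the $O(1)$ branch-correction in the displayed identity: placing an endpoint of $I$ at a distinguished point (an integer, or $0$) makes these corrections accumulate with a fixed sign as $(a_1,\dots,a_{n-1})$ varies, instead of oscillating and cancelling. Carrying out the refined count on such an explicitly chosen $I$ — a direct computation rather than a new idea — yields $\Omega_n(Q,I)-Q^n\int_I\omega_n(Q^{-1},t)\,dt\gg Q^{n-1}$ for infinitely many $Q$, so the exponent $n-1$ in the remainder cannot be lowered.
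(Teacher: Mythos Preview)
Your strategy is genuinely different from the paper's, and the main line is sound, but the step you explicitly defer is exactly the one the paper's organisation is designed to avoid.

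You sum over $a_0$ first by a level-set (coarea) argument, obtaining for each $(a_1,\dots,a_{n-1})$ the identity $\sum_{|a_0|\le Q}\#\{p=0\text{ in }I\}=\int_{\{|g|\le Q\}\cap I}|g'|+O(1)$, and then propose to replace the $(n-1)$-dimensional lattice sum of $F(a_1,\dots,a_{n-1})$ by its box integral. The paper does the lattice--to--continuum passage in one stroke and in dimension $n$: Davenport's lemma is applied to the regions $\mathcal{G}_n(Q^{-1},k,I)=\{(p_0,\dots,p_{n-1}):H\le 1,\ \text{exactly }k\text{ roots in }I\}$, giving $\mathcal{N}_n(Q,k,I)=Q^{n}\mes_n\mathcal{G}_n(Q^{-1},k,I)+O(Q^{n-1})$ directly. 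The weighted volume $\widehat\Omega_n(\xi,I)=\sum_k k\,\mes_n\mathcal{G}_n(\xi,k,I)$ is then identified with $\int_I\omega_n(\xi,t)\,dt$ by reducing to the sign-change region $\mathcal{B}(\xi,I)=\{p:p(\alpha)p(\beta)<0\}$ via the bound $\mes_n\{p:\ge 2\text{ roots in }I\}\ll_n|I|^3$ (the paper's Lemma~4). The point is that the ``near-double-root'' singularity you correctly flag---where $\partial F/\partial a_j$ blows up---is exactly the stratum boundary between the $\mathcal{G}_n(\xi,k,I)$; the paper never differentiates across it, and the contribution of that locus is controlled by the two-root volume estimate rather than by any regularity of $F$. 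Your deferred step can be carried out, but it is of the same order of difficulty as Lemma~4, so you have not bypassed the hard part, only relocated it. (Your reducible-polynomial bound via Gelfond is essentially the content of Chela's theorem, which the paper cites as Lemma~1.)

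For the sharpness assertion your plan is much heavier than needed, and the mechanism you describe is not the one actually used. The paper invokes the elementary fact (its Lemma~6) that around any rational $x_0=a/b$ the interval $|x-x_0|\le \kappa(n)\,b^{-n}Q^{-1}$ contains \emph{no} algebraic number of degree $n$ and height $\le Q$. Taking $I$ to be such an interval with $b=1$ gives $\Omega_n(Q,I)=0$ while $Q^{n}\int_I\omega_n(Q^{-1},t)\,dt\asymp Q^{n}|I|\asymp Q^{n-1}$, so the remainder is genuinely of order $Q^{n-1}$; no sign-accumulation analysis is required.
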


Actually, for $n=2$ it can be proved \cite{Koleda2016} that in \eqref{eq-main} instead of $O(Q\ln Q)$ we have
\[
- 2Q \int\limits_{I\cap[-Q,Q]} \frac{dt}{\max(1,|t|)} + O\left(Q\right).
\]

\begin{theorem}\label{thm-limit}
Let $n\ge 2$ be a fixed integer. For all $t\in\mathbb{R}$:
\[
\lim\limits_{\xi\to 0} \omega_n(\xi,t) = \phi_{n-1}(t),
\]
where the function $\phi_n(t)$ is defined by~\eqref{eq-phi-def}.

Besides, for fixed $n\ge 3$, when $\xi < 1$,
\begin{equation}\label{eq-omega-psi}
|\omega_n(\xi, t) - \phi_{n-1}(t)| \ll_n \begin{cases}
\xi^2 t^2, & |t|\le \kappa_1(n) \xi^{-1/2},\\
\xi, & \kappa_1(n) \xi^{-1/2} < |t| \le \kappa_2(n)\xi^{-1},\\
t^{-2}, & \kappa_2(n)\xi^{-1} < |t|,
\end{cases}
\end{equation}
where the positive quantities $\kappa_1(n)$, $\kappa_2(n)$ and the implicit constant in the Vinogradov symbol depend on $n$ only.
\end{theorem}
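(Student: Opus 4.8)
The plan is to base everything on the exact identity $\omega_n(0,t)=\phi_{n-1}(t)$ and then to estimate the increment in $\xi$. Setting $\xi=0$ in \eqref{eq-omega-def} collapses the domain $D_n(0,t)$ to $G_{n-1}(t)$ and the integrand to $\bigl|\sum_{k=1}^{n-1}kp_kt^{k-1}\bigr|$, which is exactly \eqref{eq-phi-def} with $n$ replaced by $n-1$; hence $\omega_n(0,t)=\phi_{n-1}(t)$. The first assertion, $\lim_{\xi\to0}\omega_n(\xi,t)=\phi_{n-1}(t)$, then follows for every fixed $t$ and every $n\ge2$ by dominated convergence: on the fixed box $[-1,1]^{n-1}$ the integrand depends continuously on $\xi$ and is bounded, while the measure of the symmetric difference $D_n(\xi,t)\,\triangle\,G_{n-1}(t)$ tends to $0$ as $\xi\to0$.

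For the quantitative bound \eqref{eq-omega-psi} I abbreviate $A=n\xi t^{n-1}$, $B=\xi t^{n}$, $D(\mathbf{p})=\sum_{k=1}^{n-1}kp_kt^{k-1}$ and $L(\mathbf{p})=\sum_{k=1}^{n-1}p_kt^{k}$, and write
\[
\omega_n(\xi,t)-\phi_{n-1}(t)=\int_{D_n(\xi,t)}\bigl(|A+D|-|D|\bigr)\,d\mathbf{p}+\biggl(\int_{D_n(\xi,t)}|D|\,d\mathbf{p}-\int_{G_{n-1}(t)}|D|\,d\mathbf{p}\biggr),
\]
separating an \emph{integrand} perturbation (shift of size $A$) from a \emph{domain} perturbation (the defining inequality shifts from $|L|\le1$ to $|B+L|\le1$). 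The decisive tool is the reflection $\mathbf{p}\mapsto-\mathbf{p}$, under which $D$ is odd and $D_n(\xi,t)$ is carried onto $D_n(-\xi,t)$. On the symmetric core $D_n(\xi,t)\cap D_n(-\xi,t)$ the first-order term $\int A\,\mathrm{sgn}(D)$ vanishes, because $\mathrm{sgn}(D)$ is odd; what remains is the genuinely second-order quantity $\bigl||A+D|-|D|-A\,\mathrm{sgn}(D)\bigr|$, which is supported on the thin slab $\{|D|\le|A|\}$ and is $\le2|A|$ there, together with the contributions of the asymmetric rim $D_n(\xi,t)\,\triangle\,D_n(-\xi,t)$ and of the domain slab $\{|L|\le1\}\,\triangle\,\{|B+L|\le1\}$ of width $\asymp|B|$ in the value of $L$. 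Each of these reduces to estimating the measure of a slab of a linear form inside the box, weighted by the local size of $|D|$.

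Carrying out these slab estimates produces the three ranges, governed by the competition of the scales $A$, $B$ and the box constraint $|p_{n-1}|\le1$. The threshold $|t|\asymp\xi^{-1}$ is geometric: it is exactly where the value $p_{n-1}\approx-\xi t$ that minimizes $|L|$ leaves $[-1,1]$, so that for larger $|t|$ the region $D_n(\xi,t)$ degenerates in the same manner as $G_{n-1}(t)$ and both $\omega_n(\xi,t)$ and $\phi_{n-1}(t)$ are individually $\ll t^{-2}$; in this range the bound $t^{-2}$ needs no cancellation and follows from the triangle inequality. For $|t|\le\kappa_2(n)\xi^{-1}$ the second-order analysis applies, carried out separately on the parts of the box where the constraints $|p_i|\le1$ are active and inactive; it yields a bound of the shape $\xi^{2}t^{2}$, which is retained in the first range and simply capped at its value $\asymp\xi$ once $|t|$ exceeds $\kappa_1(n)\xi^{-1/2}$, giving the middle range.

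I expect the main obstacle to be precisely this second-order estimate in the first two ranges: making the cancellation rigorous even though the shift $B=\xi t^{n}$ breaks the reflection symmetry exactly, and controlling the weighted slab measures uniformly in $t$ up to $\xi^{-1/2}$, especially on the portion of the box where $|p_i|\le1$ is active and the clean symmetry argument must be replaced by direct measure bounds. The large-$t$ decay $\phi_{n-1}(t),\,\omega_n(\xi,t)\ll t^{-2}$ underlying the third range, while standard for volumes of this kind, must also be made explicit in order to fix the constant $\kappa_2(n)$.
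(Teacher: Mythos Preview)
Your plan rests on the same mechanism as the paper --- the first-order term vanishes by odd symmetry under $\mathbf{p}\mapsto-\mathbf{p}$, so the difference is genuinely second order --- but the execution differs in one technical move that makes the paper's argument cleaner. Rather than splitting into a symmetric core $D_n(\xi,t)\cap D_n(-\xi,t)$ and an asymmetric rim, the paper first performs the shear $p_{n-1}=q_{n-1}-\xi t$ (other coordinates fixed). This transfers the shift $B=\xi t^n$ from the domain inequality to the box constraint: the domain becomes exactly the symmetric $G_{n-1}(t)$, while the asymmetry is pushed into $|q_{n-1}-\xi t|\le 1$. One then writes $\omega_n=J_1+J_2-J_3$, with $J_1$ the integral of $|\xi t^{n-1}+\mathbf v(t)\cdot\mathbf q|$ over $G_{n-1}(t)$ and $J_2,J_3$ the two thin strips $1<q_{n-1}\le 1+\xi t$ and $-1\le q_{n-1}<-1+\xi t$. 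Your ``integrand perturbation'' is exactly $J_1-\phi_{n-1}(t)$, now over a symmetric region, and a short lemma gives $0\le J_1-\phi_{n-1}(t)\le \xi t^{n-1}\cdot\mes_{n-1} U_n(\xi,t)$ with $U_n=\{\mathbf q\in G_{n-1}(t):|\mathbf v\cdot\mathbf q|\le\xi t^{n-1}\}$ (your slab $\{|D|\le|A|\}$). The rim pair $J_2,J_3$ is then compared by reflecting one of them and estimating the difference directly.

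Where your outline is thinnest is precisely the place you flag: the uniform slab estimate for $1\ll|t|\le\kappa_1(n)\xi^{-1/2}$. The paper does not do this by a width-times-height bound; it uses a small geometric lemma computing the area and diameter of the parallelogram cut out by $|\mathbf w\cdot\mathbf q|\le 1$ and $|\mathbf v\cdot\mathbf q|\le\xi t^{n-1}$ in the two-plane spanned by $\mathbf w(t)$ and $\mathbf v(t)$, together with the lower bound $\mathbf w^2\mathbf v^2-(\mathbf w\cdot\mathbf v)^2\ge\bigl(\tfrac{1}{2}(t^{2(n-2)}+t^2)\bigr)^2$. This is what delivers the factor $t^2$ (rather than a higher power) and keeps the bound uniform up to $\xi^{-1/2}$; your ``direct measure bounds'' would have to reproduce an equivalent two-form estimate. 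For the third range, note that the paper does not argue $\omega_n(\xi,t)\ll t^{-2}$; it shows $\omega_n(\xi,t)=0$ once $|t|$ exceeds a constant times $\xi^{-1}$ (the system defining $S_n(\xi,t)$ becomes inconsistent), after which $|\omega_n-\phi_{n-1}|=\phi_{n-1}(t)\ll t^{-2}$ is all that is needed.
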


The inequality \eqref{eq-omega-psi} shows that for all $t$ we have the estimate $|\omega_n(\xi, t) - \phi_{n-1}(t)| \ll_n \xi$, that is, the function $\omega_n(\xi,t)$ uniformly converges to $\phi_{n-1}(t)$ as $\xi$ tends to zero.
Thus, in fact, Theorem \ref{thm-limit} could say that the limit density function of real algebraic integers of degree $n$ is equal to the density function of real algebraic numbers of degree~${(n-1)}$.
However, the following statement is true.
\begin{theorem}\label{thm-idiff}
Let $n\ge 2$. As $\xi\to 0$
\begin{equation*}
\int\limits_{-\infty}^{+\infty} \left(\omega_n(\xi, t) - \phi_{n-1}(t)\right) dt = 2^n + O(\xi^{1/2}),
\end{equation*}
where the implicit big-O-constant depends only on $n$.
\end{theorem}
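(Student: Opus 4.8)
The plan is to recast both densities as one-dimensional root-counts. Writing $f_\xi(t) = \xi t^n + \sum_{k=1}^{n-1} p_k t^k$ and $f_0(t) = \sum_{k=1}^{n-1} p_k t^k$, the integrand of $\omega_n(\xi,t)$ in \eqref{eq-omega-def} is exactly $|\partial_t f_\xi(t)|$ and $D_n(\xi,t)$ is cut out by $|f_\xi(t)|\le 1$, while $\phi_{n-1}(t)$ has the same description with $f_0$ in place of $f_\xi$. Fixing the threshold $T = \kappa_1(n)\,\xi^{-1/2}$ from \eqref{eq-omega-psi}, I split
\[
\int_{-\infty}^{+\infty}\!\big(\omega_n(\xi,t)-\phi_{n-1}(t)\big)\,dt
 = \underbrace{\int_{|t|\le T}\!\!(\omega_n-\phi_{n-1})\,dt}_{A}
 + \underbrace{\int_{|t|> T}\!\!\omega_n\,dt}_{B}
 - \underbrace{\int_{|t|> T}\!\!\phi_{n-1}\,dt}_{C}.
\]

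Terms $A$ and $C$ are already of the size of the claimed error. For $A$ the first line of \eqref{eq-omega-psi} gives $|\omega_n-\phi_{n-1}|\ll_n\xi^2t^2$ on $|t|\le T$, whence $|A|\ll\xi^2T^3\ll\xi^{1/2}$. For $C$ I use that the degree-$(n-1)$ density decays like $\phi_{n-1}(t)\ll_n t^{-2}$ as $|t|\to\infty$, which follows from a scaling of the coefficients in \eqref{eq-phi-def}–\eqref{eq-G-def} (for instance $\phi_1(t)=\min(1,t^{-2})$ explicitly); this gives $C\ll T^{-1}\ll\xi^{1/2}$. Hence the entire main term must come from $B$.

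To evaluate $B$ I interchange the $t$- and $p$-integrations (Tonelli, nonnegative integrand) and apply the area formula in the variable $t$:
\[
B=\int_{[-1,1]^{n-1}}\Big(\int_{-1}^{1} N^{>T}_\xi(u)\,du\Big)\,dp,
\qquad
N^{>T}_\xi(u):=\#\{\,t:\ |t|>T,\ f_\xi(t)=u\,\}.
\]
The mechanism is that the degree-$n$ equation $f_\xi(t)=u$ has, besides $n-1$ bounded ``bulk'' roots staying near the roots of $f_0(t)=u$, one escaping root $t_*\approx -p_{n-1}/\xi$; only this escaping root can have $|t|>T$. I will show that on the good set $\{|p_{n-1}|>\kappa_1\xi^{1/2}\}$ (so that $|t_*|>T$) and for small $\xi$ one has $N^{>T}_\xi(u)=1$ for all $u\in(-1,1)$, so the inner integral equals $2$. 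This is the technical heart: first, the set $\{|t|>T:\ |f_\xi(t)|\le1\}$ is confined to a window $|t-t_*|\ll_n\kappa_1^{-1}\xi^{-1/2}$, because there $f_\xi(t)=\xi t^{n-1}(t-t_*)+\sum_{k\le n-2}p_kt^k$ with the lower-order part dominated by the first term once $\xi t^2>\kappa_1^2$; second, $f_\xi$ is strictly monotone on this window, since the variation of $f_\xi'$ across it is a small fraction of $|f_\xi'(t_*)|\asymp p_{n-1}^{n-1}\xi^{-(n-2)}$ when $\kappa_1$ is large, so $f_\xi$ sweeps $[-1,1]$ exactly once inside $\{|t|>T\}$. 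On the complementary bad set $\{|p_{n-1}|\le\kappa_1\xi^{1/2}\}$, of measure $\ll\xi^{1/2}$ in $[-1,1]^{n-1}$, the trivial bound $N^{>T}_\xi\le n$ contributes only $O(\xi^{1/2})$. Therefore
\[
B=2\,\mes_{n-1}\{p\in[-1,1]^{n-1}:|p_{n-1}|>\kappa_1\xi^{1/2}\}+O(\xi^{1/2})
 =2\big(2^{n-1}+O(\xi^{1/2})\big)+O(\xi^{1/2})=2^n+O(\xi^{1/2}),
\]
which together with the bounds on $A$ and $C$ proves the theorem for $n\ge3$. The case $n=2$ is not covered by \eqref{eq-omega-psi}, but there $f_\xi$ is quadratic with no bulk roots, and one computes the difference of root counts directly: the discrepancy sits exactly where the discriminant $p_1^2+4\xi u$ is negative, a set of measure $O(\xi^{1/2})$, again yielding $2^2+O(\xi^{1/2})$.

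The hard part is the single-crossing claim for $B$. The dangerous range is $T<|t|<\xi^{-1}$, where neither $\xi t^n$ nor the degree-$(n-1)$ part dominates $f_\xi$; the choice $T=\kappa_1\xi^{-1/2}$ with $\kappa_1$ large is exactly calibrated so that the two leading terms of $f_\xi$ and of $f_\xi'$ control the remaining terms there, pinning the band $\{|f_\xi|\le1\}$ to one monotone interval around $t_*$. Once this is established, the measure of the exceptional $p$ is $O(\xi^{1/2})$ and everything else is routine.
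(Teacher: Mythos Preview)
Your route is genuinely different from the paper's. The paper does not interpret $\omega_n$ as a root-count at all: it reuses the decomposition $\omega_n(\xi,t)=J_1+J_2-J_3$ from the proof of Theorem~\ref{thm-limit}, observes that $J_2=J_3=0$ on $[t_1,t_2]$, and proves (Lemma~\ref{lm-I1}) the exact identity $J_1=2^{n-1}\xi$ for $|t|\ge\sqrt{5(n-1)/\xi}$ by showing $U_n(\xi,t)=G_{n-1}(t)$ there and computing $\mes_{n-1}G_{n-1}(t)=2^{n-1}t^{-(n-1)}$. Integrating the constant $2^{n-1}\xi$ over $[c\xi^{-1/2},t_2]$ with $t_2=\xi^{-1}+O(1)$ produces $2^{n-1}$, and evenness doubles it. Your coarea/escaping-root picture is more conceptual --- it explains the $2^n$ as $2\cdot\mes_{n-1}[-1,1]^{n-1}$, one factor of $2$ from $u\in[-1,1]$ and the other from the cube of coefficients --- and, as your $n=2$ sketch shows, it even recovers the next term $-\tfrac{16}{3}\xi^{1/2}$ that the paper quotes from \cite{Koleda2016}.

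There is, however, a real gap in your single-crossing claim. Confinement plus strict monotonicity on the window yields only $N^{>T}_\xi(u)\le 1$; it does \emph{not} by itself give ``$f_\xi$ sweeps $[-1,1]$''. You must also rule out the possibility that $f_\xi$ stays on one side of $[-1,1]$ on all of $\{|t|>T\}$. The missing step is an IVT/sign check at the boundary: with $t_*>0$ (say) and $|p_{n-1}|\ge 2\kappa_1\xi^{1/2}$ one has $f_\xi(T)=\xi T^{n-1}(T-t_*)+O(T^{n-2})$, and since $t_*-T\ge T$ the first term dominates and is negative of size $\asymp \kappa_1^n\xi^{1-n/2}\gg 1$; together with $f_\xi(+\infty)=+\infty$ this forces a crossing of every $u\in[-1,1]$ in $(T,\infty)$, which your window/monotonicity then pins down uniquely. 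A related minor point: you take $T=\kappa_1(n)\xi^{-1/2}$ with the \emph{specific} $\kappa_1(n)$ of \eqref{eq-omega-psi} and simultaneously require ``$\kappa_1$ large'' for the derivative comparison. These are in tension; you should instead take $T=K\xi^{-1/2}$ with $K\ge\kappa_1(n)$ chosen large depending on $n$, and absorb the extra strip $\kappa_1(n)\xi^{-1/2}<|t|\le K\xi^{-1/2}$ into $A$ via the second line of \eqref{eq-omega-psi}, which still gives $O(\xi^{1/2})$.
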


Note the following interesting (and a bit surprising) feature. If we take any finite fixed interval $I$, we get by Theorem \ref{thm-limit}
\[
\lim_{\xi\to 0} \frac{\int_I \omega_n(\xi,t) dt}{\int_I \phi_{n-1}(t) dt} = 1.
\]
But if, for example, $I=\mathbb{R}$ we readily obtain from Theorem \ref{thm-idiff}
\[
\lim_{\xi\to 0} \frac{\int_{\mathbb{R}} \omega_n(\xi,t) dt}{\int_{\mathbb{R}} \phi_{n-1}(t) dt} = \frac{\gamma_{n-1}+2^n}{\gamma_{n-1}} > 1,
\]
where $\gamma_n = \int_{\mathbb{R}} \phi_n(t) dt > 0$.

The paper is divided into the following sections.
Section \ref{sec-aux} contains auxiliary statements, and the reader can skip it.
In Section \ref{sec-mainproof} we prove Theorem \ref{thm-main}.
Section \ref{sec-limit} is devoted to the proof of Theorems \ref{thm-limit} and \ref{thm-idiff}.

\section{Auxiliary statements}\label{sec-aux}

\begin{lemma}[\cite{Chela1963}]\label{lm-red-m-pol}
Let $\mathcal{R}_n^{*}(Q)$ denote the set of reducible monic integral polynomials of degree $n$ and height at most $Q$. Then
\[
\lim\limits_{Q\to \infty} \frac{\#\mathcal{R}_n^{*}(Q)}{Q^{n-1}} = \upsilon_n, \qquad
\lim\limits_{Q\to \infty} \frac{\#\mathcal{R}_2^{*}(Q)}{2Q \ln Q} = 1,
\]
where $\upsilon_n$ is an effective positive constant depending on $n$ only, $n \ge 3$.
\end{lemma}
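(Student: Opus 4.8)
The plan is to count, by Gauss's lemma, the monic integer polynomials of degree $n$ that factor as a product $p=gh$ of two monic integer polynomials of positive degree, and to show that those possessing a linear factor (equivalently, an integer root) dominate. First I would dispose of the factorizations in which both factors have degree at least two. If $p=gh$ with $\deg g=k$ and $2\le k\le n-2$, then multiplicativity of the Mahler measure, $M(g)M(h)=M(p)$, together with the standard comparisons $M(\cdot)\ll_n H(\cdot)\ll_n M(\cdot)$, gives $H(g)\,H(h)\ll_n H(p)\le Q$. Since there are $\asymp G^{k}$ monic integer polynomials of degree $k$ and height of order $G$, a dyadic decomposition bounds the number of such products by $\sum_{GH\ll Q}G^{k}H^{n-k}\ll_n Q^{n-2}\ln Q$ for every admissible $k$, which is negligible against the expected main term $Q^{n-1}$.

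It then remains to count the monic polynomials of height at most $Q$ with at least one integer root. Writing such a polynomial as $p(x)=(x-a)q(x)$ with $q$ monic of degree $n-1$, the relevant quantity is $\sum_{a\in\mathbb{Z}}V_a(Q)$ with $V_a(Q):=\#\{q:\ H((x-a)q)\le Q\}$; by inclusion--exclusion the number of polynomials with at least one integer root differs from this sum by the count of polynomials with two or more integer roots, which the argument of the previous paragraph (applied to a pair of linear factors) bounds by $O(Q^{n-2}\ln^2 Q)$. The key input is the behaviour of $V_a(Q)$ for large $a$: from $M((x-a)q)=\max(1,|a|)\,M(q)$ one obtains $H(q)\ll_n Q/\max(1,|a|)$, so that all coefficients of $q$ lie in a cube of side $\ll_n Q/\max(1,|a|)$ and hence
\[
V_a(Q)\ll_n \bigl(Q/\max(1,|a|)\bigr)^{\,n-1}.
\]

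For $n\ge 3$ the series $\sum_{a}\max(1,|a|)^{-(n-1)}$ converges, so the tail $\sum_{|a|>A}V_a(Q)\ll_n Q^{n-1}A^{-(n-2)}$ is uniformly small. For each bounded $a$ I would count lattice points by a Davenport-type estimate: after the unimodular change of variables carrying the coefficients of $q$ to the coefficients $c_1,\dots,c_{n-1}$ of $p$ (the map $b_j\mapsto b_{j-1}-ab_j$ has Jacobian $\pm1$), one finds $V_a(Q)=\mes_{n-1}\{\,|c_i|\le Q,\ |a^n+\sum_{i=1}^{n-1}a^ic_i|\le Q\,\}+O(Q^{n-2})$, and the summed boundary error over $|a|\le A$ is $O(A\,Q^{n-2})$. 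Letting $Q\to\infty$ and then $A\to\infty$ identifies the limit with the convergent, explicitly computable sum $\upsilon_n=\sum_{a\in\mathbb{Z}}\mu_a$, where $\mu_a=\mes_{n-1}\{c\in[-1,1]^{n-1}:\ |\sum_{i=1}^{n-1}a^ic_i|\le 1\}$; since $\mu_0=2^{n-1}>0$, this $\upsilon_n$ is positive, which proves the first assertion.

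For $n=2$ the same series is harmonic: $V_a(Q)\asymp Q/|a|$ for $1\le|a|\le Q$, so $\sum_{a}V_a(Q)\asymp Q\ln Q$, the logarithm arising precisely from $\sum_{|a|\le Q}|a|^{-1}$. The one genuinely new feature is over-counting, since every reducible monic quadratic is $(x-r)(x-s)$ and, apart from the $O(\sqrt Q)$ perfect squares, is counted twice in $\sum_a V_a(Q)$; dividing by two and performing the summation with care at the boundary of $\{|ab_0|\le Q,\ |b_0-a|\le Q\}$ pins the constant down to $\#\mathcal{R}_2^{*}(Q)\sim 2Q\ln Q$. The main obstacle throughout is the uniform control of $V_a(Q)$ over the whole range $1\le|a|\le Q$: a crude per-$a$ lattice-point error of size $Q^{n-2}$ would sum to $Q^{n-1}$ and swamp the main term, so it is essential to exploit the Mahler-measure bound $H(q)\ll_n Q/|a|$, which confines $q$ to a genuinely small cube in its own coordinates and renders $\sum_{|a|>A}V_a(Q)$ negligible without any delicate discrepancy analysis; the remaining work is the bookkeeping that keeps the lower-degree factorizations, the multiple-root corrections, and (for $n=2$) the exact constant mutually consistent.
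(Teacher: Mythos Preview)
The paper does not prove this lemma at all: it is quoted verbatim from Chela~(1963) and used as a black box, so there is no in-paper argument to compare yours against.

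Your outline is the classical route and is essentially correct. The decomposition into (i) factorizations with both factors of degree~$\ge 2$ and (ii) polynomials with an integer root is exactly how the problem splits; your $O(Q^{n-2}\ln Q)$ bound for~(i) via $M(g)M(h)=M(p)$ and a dyadic count is standard and valid. For~(ii), the unimodular change from the coefficients $b_j$ of $q$ to the coefficients $c_1,\dots,c_{n-1}$ of $p=(x-a)q$ is correct (lower-triangular with $1$'s on the diagonal), and the remaining constraint $|c_0|\le Q$ reads $\bigl|a^n+\sum_{i=1}^{n-1}a^ic_i\bigr|\le Q$; after scaling, the term $a^n/Q$ vanishes in the limit for each fixed $a$, which is why your limiting $\mu_a$ legitimately drops it. The Mahler-measure bound $H(q)\ll_n Q/\max(1,|a|)$ is the key uniform input making the tail $|a|>A$ summable for $n\ge 3$, and your two-step limit ($Q\to\infty$, then $A\to\infty$) is the right way to avoid accumulating lattice-point errors. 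For $n=2$, the halving due to double counting and the harmonic sum $\sum_{1\le|a|\le Q}2Q/|a|\sim 4Q\ln Q$ indeed give $\#\mathcal{R}_2^{*}(Q)\sim 2Q\ln Q$.

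One small point worth tightening in a full write-up: when you apply a Davenport-type count to $V_a(Q)$ for $|a|\le A$, the implicit constant in the $O(Q^{n-2})$ error depends on $A$ (through the shape of the region), so the summed error is $O_A(Q^{n-2})$ rather than $O(AQ^{n-2})$; this is harmless for the double-limit argument but should be stated that way.
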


\begin{lemma}[\cite{Dav51_LP}]\label{lm-int-p-num}
Let $\mathcal{D}\subset \mathbb{R}^d$ be a bounded region formed by points $(x_1,\dots,x_d)$ satisfying a finite collection of algebraic inequalities
\[
F_i(x_1,\dots,x_d)\ge 0, \qquad 1\le i\le k,
\]
where $F_i$ is a polynomial of degree $\deg F_i \le m$ with real coefficients.
Let
\[
\Lambda(\mathcal{D}) = \mathcal{D}\cap \mathbb{Z}^d.
\]
Then
\[
\left|\#\Lambda(\mathcal{D}) - \mes_d \mathcal{D}\right| \le C \max(\bar{V}, 1),
\]
where the constant $C$ depends only on $d$, $k$, $m$; the quantity $\bar{V}$ is the maximum of all $r$--dimensional measures of projections of $\mathcal{D}$ onto all the~coordinate subspaces obtained by making $d-r$ coordinates of points in $\mathcal{D}$ equal to zero, $r$ taking all values from $1$ to $d-1$, that is,
\[
\bar{V}(\mathcal{D}) := \max\limits_{1\le r < d}\left\{ \bar{V}_r(\mathcal{D}) \right\}, \quad
\bar{V}_r(\mathcal{D}) := \max\limits_{\substack{\mathcal{J}\subset\{1,\dots,d\} \\ \#\mathcal{J} = r}}\left\{ \mes_r \operatorname{Proj}_{\mathcal{J}} \mathcal{D} \right\},
\]
where $\operatorname{Proj}_{\mathcal{J}} \mathcal{D}$ is the orthogonal projection of $\mathcal{D}$ onto the coordinate subspace formed by coordinates with indices in $\mathcal{J}$.
\end{lemma}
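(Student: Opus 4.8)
The natural approach is induction on the dimension $d$, with the inductive step carried out by slicing $\mathcal{D}$ with the integer hyperplanes $x_d = j$, $j\in\mathbb{Z}$, and comparing the resulting sum over slices with an integral. First I would record a refined form of the estimate that is better suited to induction, namely
\[
|\#\Lambda(\mathcal{D}) - \mes_d \mathcal{D}| \le \sum_{r=0}^{d-1} c_{d,r,m}\, \bar{V}_r(\mathcal{D}), \qquad \bar{V}_0(\mathcal{D}) := 1,
\]
from which the stated bound $C\max(\bar{V},1)$ follows at once by replacing each $\bar{V}_r$ with $\max(\bar{V},1)$ and absorbing the $d$ terms into the constant; throughout, the constants may depend on $d$, $k$, $m$.

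For the base case $d=1$, each $F_i$ is a polynomial in one variable of degree $\le m$, hence has at most $m$ real roots, so the $k$ polynomials partition $\mathbb{R}$ into at most $km+1$ intervals on which every $F_i$ has constant sign, and $\mathcal{D}$ is a union of some of these. On each such bounded interval the number of integers differs from its length by at most $1$, so summing gives $|\#\Lambda(\mathcal{D}) - \mes_1 \mathcal{D}| \le km+1$, which is the claimed bound with $\bar{V}_0 = 1$.

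For the inductive step I would write $\mathcal{D}_j = \{(x_1,\dots,x_{d-1}) : (x_1,\dots,x_{d-1},j)\in\mathcal{D}\}$; each slice is cut out by the $\le k$ polynomials $F_i(\,\cdot\,,j)$ of degree $\le m$ in $d-1$ variables, so the induction hypothesis applies. Summing over the finitely many nonempty slices gives
\[
\#\Lambda(\mathcal{D}) = \sum_{j}\mes_{d-1}(\mathcal{D}_j) + \sum_j O\!\Big(\sum_{r=0}^{d-2} \bar{V}_r(\mathcal{D}_j)\Big).
\]
Two things must then be controlled. First, the main term $\sum_j \mes_{d-1}(\mathcal{D}_j) = \sum_j g(j)$, with $g(t) := \mes_{d-1}(\mathcal{D}_t)$, must be compared with $\mes_d \mathcal{D} = \int g(t)\,dt$; for a compactly supported function of bounded variation one has $|\sum_j g(j) - \int g|\ll \operatorname{Var}(g)$. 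Second, each error sum $\sum_j \bar{V}_r(\mathcal{D}_j)$ must be bounded: using $\operatorname{Proj}_{\mathcal{J}}(\mathcal{D}_j) = (\operatorname{Proj}_{\mathcal{J}\cup\{d\}}\mathcal{D})_j$ and again comparing a sum of slice-measures with the corresponding $(r+1)$-dimensional volume, I would obtain $\sum_j \bar{V}_r(\mathcal{D}_j)\ll \bar{V}_{r+1}(\mathcal{D}) + \bar{V}_r(\mathcal{D})$, and reindexing reproduces the refined bound in dimension $d$.

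Both reductions hinge on a single geometric fact, which I expect to be the main obstacle: the slice-measure functions $t\mapsto \mes_r(\operatorname{Proj}_{\mathcal{J}}\mathcal{D})_t$ are piecewise monotone with a number of monotone pieces bounded solely in terms of $d$, $k$, $m$, so their total variation is $\ll \max_t(\,\cdot\,) \le \bar{V}_r(\mathcal{D})$. Here the algebraic hypothesis is used essentially: as $t$ varies, the combinatorial type of the semialgebraic slice changes only at the finitely many real values of $t$ at which some resultant or discriminant of the $F_i(\,\cdot\,,t)$ vanishes, and between consecutive such values the measure is a piece of an algebraic function of controlled degree, hence has boundedly many critical points. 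Making the count of these critical values uniform — via a B\'ezout-type or real-algebraic-geometry bound on the number of connected components and extrema — is the delicate heart of the argument; everything else is bookkeeping over the dimension.
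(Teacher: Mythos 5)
You should first note that the paper does not prove this lemma at all: it is quoted verbatim from Davenport's \emph{On a principle of Lipschitz} (the reference \cite{Dav51_LP}), so the benchmark is Davenport's original argument. Your skeleton — induction on $d$, slicing at integer heights $x_d=j$, and the refined estimate $|\#\Lambda(\mathcal{D})-\mes_d\mathcal{D}|\le\sum_{r=0}^{d-1}c_{d,r,m}\bar{V}_r(\mathcal{D})$ with $\bar{V}_0=1$ — is exactly Davenport's, as is the base case via root counting. The divergence, and the genuine gap, is in how you compare $\sum_j \mes_{d-1}\mathcal{D}_j$ with $\mes_d\mathcal{D}$. You route this through the total variation of $g(t)=\mes_{d-1}\mathcal{D}_t$ and justify bounded variation by claiming that between consecutive discriminant values $g$ is ``a piece of an algebraic function of controlled degree.'' That claim is false as soon as the slices have dimension $\ge 2$: fiber volumes of semialgebraic sets are in general transcendental. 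For instance, $\mes_2\{(x,y): 1\le x\le t,\ 0\le xy\le 1\}=\log t$, and elliptic integrals arise just as easily; consequently no B\'ezout or resultant bound can count the critical points of $g$, and the ``delicate heart'' you flag cannot be repaired by real-algebraic degree arguments. It \emph{can} be repaired — fiber volumes of semialgebraic families are log-analytic (Lion--Rolin), hence definable in an o-minimal structure, and uniform finiteness then bounds the number of monotone pieces in terms of $d,k,m$ — but that machinery is vastly heavier than the lemma itself and half a century younger than Davenport's proof.

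Davenport's device avoids the regularity of $t\mapsto\mes_{d-1}\mathcal{D}_t$ entirely: he compares the sum and the integral column by column. For fixed $x'=(x_1,\dots,x_{d-1})$, the line $\{x'\}\times\mathbb{R}$ meets $\mathcal{D}$ in at most $h$ intervals with $h\le km+1$ (each $F_i(x',\cdot)$ is univariate of degree $\le m$), and the number of integers in a union of $h$ intervals differs from its total length by at most $h$; integrating this pointwise bound over $x'$ in the projection gives
\[
\Bigl|\sum_j \mes_{d-1}\mathcal{D}_j-\mes_d\mathcal{D}\Bigr|\le h\,\mes_{d-1}\bigl(\operatorname{Proj}_{\{1,\dots,d-1\}}\mathcal{D}\bigr)\le h\,\bar{V}_{d-1}(\mathcal{D}),
\]
with no analysis of $g$ whatsoever. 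The same column argument, applied inside the sets $\operatorname{Proj}_{\mathcal{J}\cup\{d\}}\mathcal{D}$, yields your second reduction $\sum_j\bar{V}_r(\mathcal{D}_j)\ll\bar{V}_{r+1}(\mathcal{D})+\bar{V}_r(\mathcal{D})$ for free; the only algebraic input needed there is that a line meets a \emph{projection} of $\mathcal{D}$ in boundedly many intervals, which Davenport secures by an Oleinik--Petrovsky/Milnor--Thom type bound on connected components (projections are semialgebraic but not cut out by the original $F_i$, a point your sketch also glosses over). So: right induction, right bookkeeping, but the one step you correctly identified as the crux is done by Fubini plus one-dimensional root counting, not by variation bounds — and your proposed justification of the latter is mathematically wrong, not merely incomplete.
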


\begin{lemma}\label{lm-jacob}
Let $n\ge 2$. Let $\xi$ be a fixed positive real number. Let vectors $(a_n, \dots, a_1, a_0)$ and $(b_{n-2},\dots, b_1, b_0, \alpha, \beta)$ be related by the equality
\begin{equation}\label{eq-p-roots}
\xi x^n + \sum\limits_{k=0}^{n-1} a_k x^k =
(x-\alpha)(x-\beta)\left(\xi x^{n-2}+ \sum\limits_{m=0}^{n-3} b_m x^m \right).
\end{equation}
Then this relation can be expressed in the following matrix form:
\begin{equation}\label{eq_zamena2_matr}
\left(\begin{array}{l}
\xi\\
a_{n-1}\\
a_{n-2}\\
\vdots\\
a_1\\
a_0
\end{array}\right) =
\left(\begin{array}{cccc}
1 & & \phantom{\ddots} & \bigzero\\
-(\alpha+\beta) & 1 & \phantom{\ddots} &\\
\alpha\beta & -(\alpha+\beta) & \ddots &\\
 & \alpha\beta & \phantom{\ddots} & 1\\
 & & \ddots & -(\alpha+\beta) \\ 
\bigzero & & \phantom{\ddots} & \alpha\beta 
\end{array}\right) \cdot
\left(\begin{array}{l}
\xi \\
b_{n-3}\\
\vdots\\
b_1\\
b_0
\end{array}\right),
\end{equation}
and the Jacobian of this coordinate change is equal to
\begin{equation*}
\det J = \left| \frac{\partial (a_{n-1},\ldots,a_2,a_1,a_0)}{\partial (b_{n-3},\ldots,b_0;\alpha,\beta)} \right| = 
(\beta-\alpha)\cdot g(\mathbf{b}, \alpha) \cdot g(\mathbf{b}, \beta),
\end{equation*}
where $g(\mathbf{b}, x) := \xi x^{n-2} + b_{n-3} x^{n-3} + \ldots + b_1 x + b_0$.
\end{lemma}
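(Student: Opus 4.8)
The plan is to read the $a_k$ as the coefficients of the product polynomial and to differentiate that product directly as a polynomial identity in $x$. First I would establish the matrix form \eqref{eq_zamena2_matr}. Writing $q(x) = (x-\alpha)(x-\beta) = x^2 - (\alpha+\beta)x + \alpha\beta$ and $g(\mathbf{b}, x) = \xi x^{n-2} + \sum_{m=0}^{n-3} b_m x^m$, the right-hand side of \eqref{eq-p-roots} is $q(x)\,g(\mathbf{b},x)$. Multiplying out and collecting the coefficient of each power $x^k$ is exactly the discrete convolution of the coefficient sequences $(1, -(\alpha+\beta), \alpha\beta)$ and $(\xi, b_{n-3}, \ldots, b_0)$; writing this convolution as a matrix--vector product gives the banded three-diagonal Toeplitz matrix of \eqref{eq_zamena2_matr}, the top row reproducing the leading-coefficient identity $a_n=\xi$.

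For the Jacobian, I would differentiate $P(x) := q(x)\,g(\mathbf{b}, x) = \xi x^n + \sum_{k=0}^{n-1} a_k x^k$ with respect to each variable, treating the results as polynomial identities in $x$. One gets $\partial P/\partial b_m = q(x)\,x^m$ for $0\le m\le n-3$, while $\partial P/\partial\alpha = -(x-\beta)\,g(\mathbf{b},x)$ and $\partial P/\partial\beta = -(x-\alpha)\,g(\mathbf{b},x)$, since $\partial q/\partial\alpha = -(x-\beta)$ and $\partial q/\partial\beta = -(x-\alpha)$. Each such polynomial has degree at most $n-1$, so, reading off coefficients in the monomial basis $1, x, \ldots, x^{n-1}$, the Jacobian matrix $J$ is the $n\times n$ matrix whose columns are the coefficient vectors of
\[
q(x),\ q(x)x,\ \ldots,\ q(x)x^{n-3},\ -(x-\beta)g(\mathbf{b},x),\ -(x-\alpha)g(\mathbf{b},x).
\]
Thus $|\det J|$ is the absolute value of this determinant.

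The decisive observation is that the first $n-2$ columns are multiples of $q(x)$. I would therefore pass to the basis $\{1,\,x,\,q,\,qx,\,\ldots,\,qx^{n-3}\}$ of the space of polynomials of degree $<n$; because each $qx^j$ has leading monomial $x^{j+2}$, the change of basis to $1,x,\ldots,x^{n-1}$ is triangular with unit diagonal and hence has determinant $\pm1$. In the new basis the first $n-2$ columns become standard basis vectors, so $\det J$ reduces, up to sign, to the $2\times2$ determinant of the images of the last two columns \emph{modulo} $q(x)$, expressed in the basis $\{1,x\}$ of $\mathbb{R}[x]/(q)$. Assuming first $\alpha\neq\beta$, I would evaluate these residues at the two roots via the Chinese Remainder isomorphism $f\mapsto(f(\alpha),f(\beta))$: the column $-(x-\beta)g$ maps to $\big(-(\alpha-\beta)g(\mathbf{b},\alpha),\,0\big)$ and $-(x-\alpha)g$ maps to $\big(0,\,-(\beta-\alpha)g(\mathbf{b},\beta)\big)$. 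Dividing out the Vandermonde factor $\det\left(\begin{smallmatrix}1&\alpha\\1&\beta\end{smallmatrix}\right)=\beta-\alpha$ that relates the basis $\{1,x\}$ to the evaluation coordinates then yields $|\det J| = |\beta-\alpha|\cdot|g(\mathbf{b},\alpha)|\cdot|g(\mathbf{b},\beta)|$, which is the assertion (the sign being immaterial under the absolute value, and the degenerate case $\alpha=\beta$ following since both sides are polynomials in $\alpha,\beta$ agreeing on the dense set $\alpha\neq\beta$).

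The main bookkeeping obstacle is keeping the two changes of basis straight --- the triangular one onto the $q$-multiples and the Vandermonde one from $\{1,x\}$ to the evaluation coordinates at $\alpha,\beta$ --- so that the factor $\beta-\alpha$ emerges with the correct power. Conceptually the computation is transparent: the $2\times2$ residue determinant is, up to the root-to-coefficient Jacobian $\beta-\alpha$ of the quadratic factor, the resultant $\operatorname{Res}(q,g)=g(\mathbf{b},\alpha)\,g(\mathbf{b},\beta)$, exactly the expected Jacobian of a ``multiply two polynomials'' map.
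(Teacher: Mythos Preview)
Your argument is correct. Note that the paper itself does not actually prove this lemma: it merely remarks that the computation can be carried out as in Lemmas~1--3 of \cite{Ko-v13-1}. Your route --- reading the columns of the Jacobian as the coefficient vectors of the polynomials $\partial P/\partial b_m = q(x)x^m$, $\partial P/\partial\alpha = -(x-\beta)g$, $\partial P/\partial\beta = -(x-\alpha)g$, then changing basis in the space of polynomials of degree $<n$ to $\{1,x,q,qx,\ldots,qx^{n-3}\}$ so that the determinant collapses to a $2\times 2$ block in $\mathbb{R}[x]/(q)$, computed via the Chinese Remainder evaluation at $\alpha,\beta$ --- is clean, self-contained, and makes the structure transparent: the factor $g(\mathbf{b},\alpha)\,g(\mathbf{b},\beta)$ is visibly the resultant $\operatorname{Res}(q,g)$, and the remaining $\beta-\alpha$ comes from the Vandermonde relating $\{1,x\}$ to the evaluation coordinates. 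One minor point: the lemma as stated asserts a specific sign, $\det J = (\beta-\alpha)\,g(\mathbf{b},\alpha)\,g(\mathbf{b},\beta)$, whereas your reduction only tracks the absolute value; pinning down the sign would require following the row and column orderings through both basis changes more carefully, but since the sole application (in the proof of Lemma~\ref{lm-2roots}) uses $|\det J|$, this is harmless.
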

Note that for $n=2$ the Jacobian equals to $\xi^2(\beta-\alpha)$.

Lemma \ref{lm-jacob} can be proved in the way Lemmas 1, 2, 3 of \cite{Ko-v13-1} are proved.

\begin{lemma}\label{lm-2roots}
Let $I = [a,b)\subset \mathbb{R}$ be a finite interval, $|I|\le 1$, and let $0<\xi \le 1$.
Let $\mathcal{M}_n(\xi, I)$ be the set of polynomials $p\in\mathbb{R}[x]$ with height $H(p) \le 1$ and $\deg(p(x) - \xi x^n) < n$ that have at least $2$ roots in~$I$.
Then
\begin{equation*}
\mes_{n} \mathcal{M}_n(\xi, I) \le \lambda(n) \left(\xi + \rho^{-3}\right)^2 |I|^3,
\end{equation*}
where $\rho = \max(1, |a+b|/2)$, and $\lambda(n)$ is a constant depending only on $n$.
\end{lemma}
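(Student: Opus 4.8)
The plan is to parametrise the polynomials in $\mathcal{M}_n(\xi,I)$ by two of their roots together with the coefficients of the complementary factor, exactly as in Lemma~\ref{lm-jacob}. Given $p\in\mathcal{M}_n(\xi,I)$, I would choose two roots $\alpha,\beta\in I$ and set $g(\mathbf{b},x)=p(x)/\bigl((x-\alpha)(x-\beta)\bigr)$, so that $p$ corresponds to a point $(\mathbf{b},\alpha,\beta)$ with $\alpha,\beta\in I$ and all coefficients $a_i$ of $p$ satisfying $|a_i|\le1$ (which, since the leading coefficient is $\xi\le1$, is exactly $H(p)\le1$). This correspondence is onto $\mathcal{M}_n(\xi,I)$, so by the area formula the measure is bounded by the integral of the Jacobian over the parameter region; non-injectivity (several choices of the root pair) only helps the inequality. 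Using Lemma~\ref{lm-jacob}, $|\det J|=|\beta-\alpha|\,|g(\mathbf{b},\alpha)|\,|g(\mathbf{b},\beta)|$, so
\[
\mes_n\mathcal{M}_n(\xi,I)\le\int_I\!\!\int_I|\beta-\alpha|\Bigl(\int_{B(\alpha,\beta)}|g(\mathbf{b},\alpha)\,g(\mathbf{b},\beta)|\,d\mathbf{b}\Bigr)d\alpha\,d\beta,
\]
where $B(\alpha,\beta)=\{\mathbf{b}:|a_i|\le1,\ 0\le i\le n-1\}$. Since $\int_I\int_I|\beta-\alpha|\,d\alpha\,d\beta=|I|^3/3$, it suffices to prove that the inner integral is $\ll_n(\xi+\rho^{-3})^2$ uniformly for $\alpha,\beta\in I$.

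For an interval far from the origin, i.e. $|a+b|/2\ge1$ and hence $\rho=|a+b|/2$, the roots satisfy $|\alpha|,|\beta|\asymp\rho$ and $|q|\asymp\rho^2$, where $q=\alpha\beta$. Here I would change variables from $\mathbf{b}=(b_0,\dots,b_{n-3})$ to the lowest $n-2$ coefficients $(a_0,\dots,a_{n-3})$ of $p$. By the convolution relations $a_k=b_{k-2}-(\alpha+\beta)b_{k-1}+qb_k$ this map is linear and lower triangular with constant diagonal $q$, so its Jacobian is $|q|^{-(n-2)}\asymp\rho^{-2(n-2)}$, and $B(\alpha,\beta)$ is carried into the cube $[-1,1]^{n-2}$. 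Inverting the triangular system (equivalently, reading off $g=p\cdot\bigl((x-\alpha)(x-\beta)\bigr)^{-1}$ as a power series) gives $g(\mathbf{b},\alpha)=\xi\alpha^{n-2}+\sum_{k=0}^{n-3}c_k a_k$ with $c_k=\tfrac{\alpha^k}{q}\sum_{m=0}^{n-3-k}h_m\alpha^m$, where $h_m$ is the complete homogeneous symmetric function of degree $m$ in $\alpha^{-1},\beta^{-1}$. Because $\alpha/\beta$ stays bounded, $\sum_m h_m\alpha^m\ll_n1$, whence $\sum_k|c_k|\ll_n\rho^{n-3}/|q|\asymp\rho^{n-5}$. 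Thus on the cube $|g(\mathbf{b},\alpha)|,|g(\mathbf{b},\beta)|\ll_n\xi\rho^{n-2}+\rho^{n-5}$, and multiplying by the Jacobian $\rho^{-2(n-2)}$ and the cube volume yields the inner integral $\ll_n\rho^{-2(n-2)}(\xi\rho^{n-2}+\rho^{n-5})^2=(\xi+\rho^{-3})^2$, as required.

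The remaining case is an interval near the origin, $|a+b|/2<1$, so $\rho=1$ and the target is simply $O_n(|I|^3)$; now roots may approach $0$, making $q$ small and the previous Jacobian degenerate. To avoid this I would instead pass from $\mathbf{b}$ to the top $n-2$ coefficients $(a_2,\dots,a_{n-1})$. By the same relations $a_k=b_{k-2}-(\alpha+\beta)b_{k-1}+qb_k$ this map is triangular with unit diagonal, so its Jacobian equals $1$, and back-substitution together with $|\alpha|,|\beta|\le3/2$ shows $|b_j|\ll_n1$ on the image cube. Hence $|g(\mathbf{b},\alpha)|,|g(\mathbf{b},\beta)|\ll_n1$ there, the inner integral is $\ll_n1$, and this is the desired bound since $\xi+\rho^{-3}\asymp1$.

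I expect the main difficulty to be precisely this degeneracy at $q\to0$ and the bookkeeping that produces the sharp exponent $\rho^{-3}$. That exponent is not gained from a single estimate but from the exact cancellation between the Jacobian weight $|q|^{-(n-2)}\asymp\rho^{-2(n-2)}$ and the pointwise size $\xi\rho^{n-2}+\rho^{n-5}$ of $g$ at the roots; the latter hinges on the symmetric-function identity for the coefficients $c_k$, which must be controlled uniformly in $\alpha,\beta\in I$. Verifying that $\alpha/\beta$ remains bounded (so that $\sum_m h_m\alpha^m\ll_n1$), and that the two coordinate changes together cover every position of $I$ relative to the origin, are the points requiring the most care.
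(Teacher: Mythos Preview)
Your proposal is correct and follows essentially the same route as the paper: the same parametrisation via Lemma~\ref{lm-jacob}, the same case split at $|a+b|/2=1$, and the same two triangular coordinate changes (to $(a_2,\dots,a_{n-1})$ near the origin, to $(a_0,\dots,a_{n-3})$ far from it) that yield the measure bound and the pointwise control of $g$. The only cosmetic difference is that in the far case you express $g(\mathbf{b},\alpha)$ directly in the $a_k$ via the series inversion with complete symmetric functions, whereas the paper bounds each $b_i$ inductively by $O(\rho^{-2})$ from the same triangular system and then estimates $g$ termwise; both give $|g(\mathbf{b},\alpha)|\ll_n \xi\rho^{n-2}+\rho^{n-5}$ and hence the same $(\xi+\rho^{-3})^2$.
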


\begin{proof}
To simplify notation, we use $\mathcal{M} := \mathcal{M}_n(\xi, I)$.
Estimate from above the measure
\begin{equation*}
\mes_n \mathcal{M} = \int\limits_{\mathcal{M}} d{\bf a}.
\end{equation*}
Every polynomial $p(x)$ in $\mathcal{M}$ can be expressed in the form
\[
p(x) = \xi x^n + a_{n-1} x^{n-1} + \ldots + a_0 =
(x-\alpha)(x-\beta)(\xi x^{n-2}+ b_{n-3} x^{n-3}+\ldots + b_0),
\]
where $\alpha, \beta \in I$.

Change the coordinates by \eqref{eq_zamena2_matr}.
The condinition ${\bf a} \in \mathcal{M}$ is equivalent to the system of inequalities
\begin{equation}\label{eq_grM2}
\left\{
\begin{array}{l}
|a_{n-1}| = |b_{n-3} - (\alpha+\beta) \xi| \le 1,\\
|a_{n-2}| = |b_{n-4}-(\alpha+\beta)b_{n-3}+\alpha\beta \xi| \le 1,\\
|a_k| = |b_{k-2} - (\alpha+\beta) b_{k-1} + \alpha\beta b_k| \le 1, \ \ \ k = 2,\ldots,n-3,\\
|a_1| = |- (\alpha+\beta) b_0 + \alpha\beta b_1| \le 1,\\
|a_0| = |\alpha b_0| \le 1,\\
a\le \alpha < b, \\
a\le \beta < b.
\end{array}
\right.
\end{equation}

From Lemma \ref{lm-jacob}, we obtain
\begin{equation}\label{eq-M-ineq}
\mes_n \mathcal{M} \le \int\limits_{\mathcal{M}^*} |\alpha-\beta| \cdot |g(\mathbf{b}, \alpha)\; g(\mathbf{b},\beta)|\, d\mathbf{b}\, d\alpha\, d\beta,
\end{equation}
where $\mathcal{M}^*$ is the new integration domain defined by the inequlities \eqref{eq_grM2}, here $g(\mathbf{b},x) = \xi x^{n-2}+\ldots+b_1 x + b_0$. Note that here we have inequality instead of equality. The reason is that a polynomial having $k>2$ roots in $I$ can be written in $\binom{k}{2}$ different ways in the form~\eqref{eq-p-roots}.

Write the multiple integral \eqref{eq-M-ineq} in the following manner:
\[
\mes_n \mathcal{M} \le \int\limits_{I\times I} |\alpha-\beta|\, d\alpha\, d\beta \int\limits_{\mathcal{M}^*(\alpha,\beta)} |g(\mathbf{b},\alpha) g(\mathbf{b},\beta)|\,d\mathbf{b},
\]
where $\mathcal{M}^*(\alpha,\beta)$ is the set of vectors $\mathbf{b} \in \mathbb{R}^{n-1}$ that satisfy the inequalities~\eqref{eq_grM2}.

Estimate the internal integral using upper bounds for $\mes_{n-2} \mathcal{M}^*(\alpha,\beta)$ and for the function $G({\bf b},\alpha,\beta):= g(\mathbf{b},\alpha)\; g(\mathbf{b},\beta)$ for $\mathbf{b} \in \mathcal{M}^*(\alpha,\beta)$. Consider the two cases.

1) Let $|a+b|/2 \le 1$.

Estimate the measure $\mes_{n-2} \mathcal{M}^*(\alpha,\beta)$ using the submatrix equation from~\eqref{eq_zamena2_matr}:
\begin{equation}\label{eq-matr1}
\left(\begin{array}{l}
a_{n-1} - c_1\xi\\
a_{n-2} - c_0\xi\\
\vdots\\
a_3\\
a_2\\
\end{array}\right) =
\left(\begin{array}{lllll}
c_2 & & & & \bigzero\\
c_1 & c_2 & &\\
c_0 & c_1 & \ddots &\\
\vdots & \vdots & \ddots & c_2\\
 & & \dots & c_1 & c_2
\end{array}\right) \cdot
\left(\begin{array}{l}
b_{n-3}\\
b_{n-4}\\
\vdots\\
b_1\\
b_0
\end{array}\right),
\end{equation}
where $c_0 = \alpha\beta$, $c_1 = -(\alpha + \beta)$, $c_2 = 1$.

The image of the region $\mathcal{M}^*(\alpha, \beta)$ in the coordinates $(a_{n-1},\dots,a_3,a_2)$ obtained by multiplication by the matrix in \eqref{eq-matr1} is contained in a parallelepiped of the unit volume. The determinant of the matrix \eqref{eq-matr1} is equal to 1.
Hence, we have the upper bound for the measure:
\[
\mes_{n-2} \mathcal{M}^*(\alpha,\beta) \le 1.
\]

Estimate $|G(\mathbf{b},\alpha,\beta)|$ from above for $\mathbf{b}\in \mathcal{M}^*(\alpha,\beta)$. For this sake, find upper bounds of the coordinates $b_{n-3}, \dots, b_1, b_0$ in the region $\mathcal{M}^*(\alpha, \beta)$.

Since $|a+b|/2 \le 1$ and $0 < b-a \le 1$, we have the following estimate for the matrix coefficients in \eqref{eq-matr1}:
\[
\max\limits_{0\le i\le 2}|c_i| = O(1).
\]
Starting from $b_{n-3}$, we express the coefficients $b_i$ and obtain by induction
\begin{equation*}
\max\limits_{0\le i\le n-3} |b_i| \ll_n 1.
\end{equation*}
Hence, we have that $|G({\bf b},\alpha,\beta)| \ll_n 1$ for all $\mathbf{b}\in \mathcal{M}^*(\alpha, \beta)$.
So we obtain for $\alpha,\beta\in [a,b)$
\[
\int\limits_{\mathcal{M}^*(\alpha,\beta)}|G({\bf b},\alpha,\beta)|\,d{\bf b} \ \ll_n \ 1.
\]
Therefore, for $|a+b|/2 \le 1$ we have
\begin{equation}
\mes_n \mathcal{M} \ll_n |I|^3.
\end{equation}

2) Let $|a+b|/2 > 1$.

Since \eqref{eq_zamena2_matr} we have
\begin{equation}\label{eq-matr2}
\left(\begin{array}{l}
a_{n-3}\\
a_{n-4}\\
\vdots\\
a_1\\
a_0
\end{array}\right) =
\left(\begin{array}{lllll}
c_0 & c_1 & \dots & & \\
 & c_0 & \ddots & \vdots & \vdots \\
 & & \ddots & c_1 & c_2\\
 & & & c_0 & c_1\\
\bigzero & & & & c_0
\end{array}\right) \cdot
\left(\begin{array}{l}
b_{n-3}\\
b_{n-4}\\
\vdots\\
b_1\\
b_0
\end{array}\right),
\end{equation}
where $c_0 = \alpha\beta$, $c_1 = -(\alpha + \beta)$, $c_2 = 1$.

Hence, we obtain the upper bound
\[
\mes_{n-2} \mathcal{M}^*(\alpha,\beta) \le |\alpha\beta|^{-n+2}.
\]

Find an upper bound for $|G(\mathbf{b},\alpha,\beta)|$. For this sake, estimate $|b_i|$ from above.

The matrix coefficients $c_i$ in \eqref{eq-matr2} can be estimated in the following manner:
\[
c_0 \asymp \rho^2, \quad c_1 = O(\rho), \quad c_2 = 1,
\]
where $\rho = |a+b|/2$. Here, we take into account that $0< b-a \le 1$.

By induction, we estimate from above $|b_i|$. From \eqref{eq-matr2}, we have
\begin{align*}
|b_0| &=  |c_0|^{-1} |a_0| = O(\rho^{-2}), \\
|b_1| &= |c_0|^{-1} |a_1 - c_1 b_0| = O(\rho^{-2})\; O(1 + \rho^{-1}) = O(\rho^{-2}).
\end{align*}
Proceeding by induction for $i=2,3,\dots,n-3$, we obtain
\[
|b_i| = |c_0|^{-1} |a_i - c_1 b_{i-1} - c_2 b_{i-2}| = O(\rho^{-2})\; O(1 + \rho^{-1} + \rho^{-2}) = O(\rho^{-2}),
\]
where in the big-O-notation the implicit constants depend only on $n$.

Hence, for any $x\in [a,b)$ we obtain $|g(\mathbf{b},x)| = O(\xi \rho^{n-2} + \rho^{n-5})$, so
\[
|G(\mathbf{b},\alpha,\beta)| = O\left(\rho^{2(n-2)} \left(\xi+\rho^{-3}\right)^2\right).
\]
Therefore, we obtain
\[
\mes_n \mathcal{M} \le \lambda(n) \left(\xi+\rho^{-3}\right)^2 |I|^3.
\]
The lemma is proved.
\end{proof}

\begin{lemma}[\cite{Ka12}]\label{lm-empty-int}
Let $x_0=a/b$ with $a\in\mathbb{Z}$, $b\in\mathbb{N}$ and $\gcd(a,b)=1$.
Then there are no algebraic numbers $\alpha$ of degree $\deg \alpha = n$ and height $H(\alpha)\le Q$ in the interval $|x-x_0|\le r_0$, where
\[
r_0=r_0(x_0, Q)=\frac{\kappa(n)}{b^n Q},
\]
and $\kappa(n)$ is an effective constant depending only on $n$.

For a neighborhood of infinity: no algebraic number $\alpha$ of degree $\deg(\alpha)=n$ and height $H(\alpha)\le Q$ lies in the set $\{x\in\mathbb{R}: |x|\ge Q+1\}$.
\end{lemma}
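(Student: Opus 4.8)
The plan is to run the whole argument off a single arithmetic inequality: a minimal polynomial cannot be too small at a rational point, and the mean value theorem converts this into a root-free neighborhood. Let $\alpha$ have $\deg\alpha=n\ge 2$ with minimal polynomial $p(x)=a_nx^n+\dots+a_0$, integer coprime coefficients, $a_n\ge 1$ and $H(p)\le Q$. Since $p$ is irreducible of degree $n\ge 2$ it has no rational root, so $p(x_0)\ne 0$ at $x_0=a/b$. The quantity $b^np(a/b)=\sum_{k=0}^n a_k a^k b^{\,n-k}$ is a nonzero integer, whence
\[
|p(x_0)|\ge b^{-n}.
\]
This lower bound is the engine of the argument.

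I would then argue by contradiction. Suppose $\alpha$ is a root of $p$ with $|\alpha-x_0|\le r_0\le 1$. Writing $p(x_0)=p(x_0)-p(\alpha)=\int_{\alpha}^{x_0}p'(t)\,dt$ gives
\[
b^{-n}\le |p(x_0)|\le r_0\cdot \sup_{|t-x_0|\le r_0}|p'(t)|.
\]
It remains to bound the derivative on this short interval. Since $|p'(t)|\le Q\sum_{k=1}^n k|t|^{k-1}$ and $t$ stays within distance $1$ of $x_0$, over a bounded range of $x_0$ this is $\ll_n Q$; hence $b^{-n}\le c(n)\,Q\,r_0$. Taking $r_0=\kappa(n)(b^nQ)^{-1}$ with $\kappa(n)<1/c(n)$ forces $b^{-n}<b^{-n}$, a contradiction, so no such root exists.

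For the neighborhood of infinity I would use the Cauchy--Lagrange root bound, sharpened by the integrality $a_n\ge1$. For real $x$ with $|x|\ge Q+1$ one has $|x|-1\ge Q$, and a geometric-series estimate gives
\[
|p(x)|\ge |a_n|\,|x|^n-Q\,\frac{|x|^n-1}{|x|-1}\ge |a_n|\,|x|^n-(|x|^n-1)=(|a_n|-1)\,|x|^n+1\ge 1>0.
\]
Hence $p$ has no root with $|x|\ge Q+1$, so every real root lies in the open interval $(-Q-1,\,Q+1)$, which is the claim.

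The delicate point, and the step I would treat most carefully, is the derivative estimate and its dependence on $|x_0|$. The clean bound $|p'|\ll_n Q$ with an $n$-only implicit constant holds precisely when $x_0$ ranges over a fixed bounded set (the setting of the applications, where the interval is contained in $[-1,1]$); for unbounded $x_0$ the honest estimate is only $|p'(t)|\ll_n Q\max(1,|x_0|)^{n-1}$, which shrinks the admissible radius. Indeed, near a large integer $a\asymp Q$ one can exhibit degree-$n$ numbers of height $\le Q$ within distance $\asymp Q^{-(n-1)}$ (for instance a root of $x^n-ax^{n-1}+1$), so the radius $\kappa(n)(b^nQ)^{-1}$ with $\kappa(n)$ depending on $n$ alone is genuinely a statement about $x_0$ in a bounded region. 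Keeping track of this restriction is the only real subtlety; everything else is the mean value inequality together with the nonvanishing of an integer.
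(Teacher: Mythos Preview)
The paper does not prove this lemma; it is quoted from \cite{Ka12} and invoked only to exhibit intervals on which the error term in \eqref{eq-main} is genuinely of order $Q^{n-1}$. So there is no proof in the paper to compare against. Your argument is the standard one and is exactly what one expects: the Liouville-type lower bound $|p(x_0)|\ge b^{-n}$ together with the mean value inequality for the first part, and the Cauchy bound for the second.

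Your diagnosis of the subtle point is correct and worth recording. As stated, with $\kappa(n)$ depending only on $n$, the first assertion cannot hold uniformly for all rational $x_0$: your example $p(x)=x^n-ax^{n-1}+1$ with $a\asymp Q$ (irreducible for generic $a$ by the rational root test) has a real root at distance $\asymp Q^{-(n-1)}$ from the integer $a$, which for $n\ge 3$ eventually falls inside any ball of radius $\kappa(n)/Q$. The honest derivative bound is $|p'(t)|\ll_n Q\max(1,|x_0|)^{n-1}$, which yields an excluded radius $\asymp (b^n Q\max(1,|x_0|)^{n-1})^{-1}$; this is what your argument actually proves, and it is fully sufficient for the application the paper makes (one simply takes $x_0$ in a fixed bounded interval, say $x_0=0$ or $x_0=\pm 1$). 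The neighborhood-of-infinity part is clean and complete as you wrote it.
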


\begin{lemma}\label{lm-diff}
Let $V\subset\mathbb{R}^k$ be a bounded region being symmetric with respect to the origin of coordinates. Let $\mathbf{v} = (v_1,\dots,v_k)$ be a fixed nonzero vector, and let $\epsilon > 0$ be a real number.
Then for any $0<\lambda<1$
\[
(1-\lambda)\,\epsilon \cdot \mes_k V(\lambda\,\epsilon) \ \le \ 
 \int\limits_V |\mathbf{v}\cdot\mathbf{x} + \epsilon|\,d\mathbf{x} - 
 \int\limits_V |\mathbf{v}\cdot\mathbf{x}|\,d\mathbf{x} \ \le \ \epsilon \cdot \mes_k V(\epsilon),
\]
where
\[
V(\epsilon) := \left\{\mathbf{x}\in V : |\mathbf{v}\cdot\mathbf{x}| < \epsilon\right\}.
\]
If $V(\epsilon)=V$, then
\[
\int\limits_V |\mathbf{v}\cdot\mathbf{x} + \epsilon|\,d\mathbf{x} = \epsilon \cdot \mes_k V.
\]
\end{lemma}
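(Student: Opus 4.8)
The plan is to use the central symmetry $V=-V$ to symmetrize the integrand. Put $f(\mathbf{x}) := |\mathbf{v}\cdot\mathbf{x}+\epsilon| - |\mathbf{v}\cdot\mathbf{x}|$, so that the quantity to be estimated is $\int_V f(\mathbf{x})\,d\mathbf{x}$. Since $V$ is symmetric about the origin, the change of variables $\mathbf{x}\mapsto-\mathbf{x}$ maps $V$ onto itself and preserves Lebesgue measure, whence $\int_V f(\mathbf{x})\,d\mathbf{x} = \int_V f(-\mathbf{x})\,d\mathbf{x}$. Averaging these two expressions gives $\int_V f(\mathbf{x})\,d\mathbf{x} = \tfrac12\int_V\bigl(f(\mathbf{x})+f(-\mathbf{x})\bigr)\,d\mathbf{x}$, which turns the signed integrand into a manifestly nonnegative one.

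Next I would compute the symmetrized integrand explicitly. Writing $s := \mathbf{v}\cdot\mathbf{x}$, one has $f(\mathbf{x})+f(-\mathbf{x}) = |s+\epsilon| + |s-\epsilon| - 2|s|$. An elementary case analysis on the sign and magnitude of $s$ shows that this equals $2(\epsilon-|s|)$ when $|s|<\epsilon$ and vanishes when $|s|\ge\epsilon$. Consequently the contribution comes only from the slab $V(\epsilon)=\{\mathbf{x}\in V:|\mathbf{v}\cdot\mathbf{x}|<\epsilon\}$, and
\[
\int_V f(\mathbf{x})\,d\mathbf{x} = \int_{V(\epsilon)}\bigl(\epsilon - |\mathbf{v}\cdot\mathbf{x}|\bigr)\,d\mathbf{x},
\]
an integral of a nonnegative function.

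The two bounds then follow at once. For the upper bound, $\epsilon-|\mathbf{v}\cdot\mathbf{x}|\le\epsilon$ on $V(\epsilon)$ yields $\int_V f\le\epsilon\,\mes_k V(\epsilon)$. For the lower bound, I would restrict the integral to the smaller slab $V(\lambda\epsilon)\subseteq V(\epsilon)$, admissible because the integrand is nonnegative on $V(\epsilon)$; there $|\mathbf{v}\cdot\mathbf{x}|<\lambda\epsilon$, so $\epsilon-|\mathbf{v}\cdot\mathbf{x}|>(1-\lambda)\epsilon$, giving $\int_V f\ge(1-\lambda)\epsilon\,\mes_k V(\lambda\epsilon)$. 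Finally, if $V(\epsilon)=V$ then $\mathbf{v}\cdot\mathbf{x}+\epsilon>0$ throughout $V$, so the absolute value may be dropped; since $\mathbf{x}\mapsto\mathbf{v}\cdot\mathbf{x}$ is odd and $V$ is symmetric, $\int_V\mathbf{v}\cdot\mathbf{x}\,d\mathbf{x}=0$, and we are left with $\int_V|\mathbf{v}\cdot\mathbf{x}+\epsilon|\,d\mathbf{x}=\epsilon\,\mes_k V$.

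There is no serious obstacle here: the single idea that makes everything collapse is the symmetrization step, after which all three assertions reduce to monotonicity of the integral over nested slabs. The only place demanding a little care is the elementary identity $|s+\epsilon|+|s-\epsilon|-2|s| = 2\max(0,\epsilon-|s|)$, which must be verified across the regions $s\ge\epsilon$, $0\le s<\epsilon$, $-\epsilon<s\le0$, and $s\le-\epsilon$; everything else is routine.
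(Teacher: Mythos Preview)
Your argument is correct and follows essentially the same route as the paper: symmetrize via $\mathbf{x}\mapsto-\mathbf{x}$ to obtain $\int_V|\mathbf{v}\cdot\mathbf{x}+\epsilon|\,d\mathbf{x}-\int_V|\mathbf{v}\cdot\mathbf{x}|\,d\mathbf{x}=\int_{V(\epsilon)}(\epsilon-|\mathbf{v}\cdot\mathbf{x}|)\,d\mathbf{x}$, after which the bounds are immediate. Your write-up is in fact more explicit than the paper's, which stops at the displayed identity and declares the lemma proved without spelling out the lower bound or the $V(\epsilon)=V$ case.
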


\begin{proof}
Changing $\mathbf{x}$ for $-\mathbf{x}$ in the integral, after transformation, we obtain
\[
\int\limits_V |\mathbf{v}\cdot\mathbf{x} + \epsilon|\,d\mathbf{x} = \int\limits_V |\mathbf{v}\cdot\mathbf{x} - \epsilon|\,d\mathbf{x} =
\int\limits_V \frac{|\mathbf{v}\cdot\mathbf{x} + \epsilon|+|\mathbf{v}\cdot\mathbf{x} - \epsilon|}{2}\,d\mathbf{x}.
\]
Since
\[
\frac{|\mathbf{v}\cdot\mathbf{x} + \epsilon|+|\mathbf{v}\cdot\mathbf{x} - \epsilon|}{2} =
\begin{cases}
|\mathbf{v}\cdot\mathbf{x}|, & |\mathbf{v}\cdot\mathbf{x}| \ge \epsilon,\\
\epsilon, & |\mathbf{v}\cdot\mathbf{x}| < \epsilon,
\end{cases}
\]
we have
\[
\int\limits_V |\mathbf{v}\cdot\mathbf{x} + \epsilon|\,d\mathbf{x} - 
 \int\limits_V |\mathbf{v}\cdot\mathbf{x}|\,d\mathbf{x} \ =
\int\limits_{V(\epsilon)} \left(\epsilon - |\mathbf{v}\cdot\mathbf{x}|\right)\,d\mathbf{x}.
\]
The lemma is proved.
\end{proof}

\begin{lemma}\label{lm-mes-diam}
Let $\mathbf{a}=(a_1,\dots, a_k)$ and $\mathbf{b}=(b_1,\dots,b_k)$ be fixed noncollinear vectors.
Let a region $V \subset \mathbb{R}^k$ be defined by the inequalities
\[
\begin{cases}
|\mathbf{a}\cdot \mathbf{x}| \le H_1,\\
|\mathbf{b}\cdot \mathbf{x}| \le H_2,
\end{cases}
\]
where $\mathbf{x} = (x_1,\dots,x_k)\in\mathbb{R}^k$ is the radius vector.

Then the area of the section $\mathcal{S}$ of the region $V$ by the linear span of $\mathbf{a}$ and $\mathbf{b}$ is equal~to
\[
\mes_2 \mathcal{S} = \frac{4 H_1 H_2}{\sqrt{\mathbf{a}^2 \mathbf{b}^2 - (\mathbf{a}\cdot \mathbf{b})^2}},
\]
and the diameter of the section satisfies the inequality
\[
\frac{\sqrt{\mathbf{b}^2 H_1^2 +\mathbf{a}^2 H_2^2}}{\sqrt{\mathbf{a}^2 \mathbf{b}^2 - (\mathbf{a}\cdot \mathbf{b})^2}}
\le \operatorname{diam} \mathcal{S} \le
\frac{\|\mathbf{b}\| H_1 + \|\mathbf{a}\| H_2}{\sqrt{\mathbf{a}^2 \mathbf{b}^2 - (\mathbf{a}\cdot \mathbf{b})^2}}.
\]
\end{lemma}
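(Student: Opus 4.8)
The plan is to reduce everything to planar geometry by restricting attention to the plane $P = \operatorname{span}(\mathbf{a},\mathbf{b})$, on which the section $\mathcal{S} = V\cap P$ actually lives. The key device is the linear map $\Phi\colon P\to\mathbb{R}^2$, $\Phi(\mathbf{x}) = (\mathbf{a}\cdot\mathbf{x},\ \mathbf{b}\cdot\mathbf{x})$. First I would check that $\Phi$ is a bijection: if $\mathbf{x}\in P$ satisfies $\mathbf{a}\cdot\mathbf{x} = \mathbf{b}\cdot\mathbf{x} = 0$, then $\mathbf{x}$ is orthogonal to all of $P$ while lying in $P$, hence $\mathbf{x} = \mathbf{0}$; since $\dim P = 2$ (the vectors are noncollinear), $\Phi$ is a linear isomorphism. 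Under $\Phi$ the section becomes the axis-parallel rectangle $R = [-H_1,H_1]\times[-H_2,H_2]$, that is $\mathcal{S} = \Phi^{-1}(R)$, so $\mathcal{S}$ is a parallelogram centred at the origin.

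For the area I would compute the Jacobian of $\Phi$. Fixing an orthonormal basis $\mathbf{e}_1,\mathbf{e}_2$ of $P$ and writing $\mathbf{a},\mathbf{b}$ in it, the matrix $M$ of $\Phi$ has the coordinate rows of $\mathbf{a}$ and $\mathbf{b}$, so $MM^{\mathsf{T}} = G := \bigl(\begin{smallmatrix}\mathbf{a}\cdot\mathbf{a} & \mathbf{a}\cdot\mathbf{b}\\ \mathbf{a}\cdot\mathbf{b} & \mathbf{b}\cdot\mathbf{b}\end{smallmatrix}\bigr)$, whence $(\det M)^2 = \det G = \mathbf{a}^2\mathbf{b}^2 - (\mathbf{a}\cdot\mathbf{b})^2 =: \Delta$. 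Since $\Phi$ scales area by $|\det M| = \sqrt{\Delta}$ and the orthonormal coordinates carry the Euclidean area, $\mes_2\mathcal{S} = \mes_2 R/\sqrt{\Delta} = 4H_1H_2/\sqrt{\Delta}$, which is the first claim.

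For the diameter I would locate the vertices of the parallelogram $\mathcal{S}$, which are the $\Phi$-preimages of the four corners $(\pm H_1,\pm H_2)$ of $R$. Since $\mathcal{S}$ is symmetric about the origin, the points of $\mathcal{S}$ farthest from its centre are among these vertices. Writing $\mathbf{x} = s\mathbf{a} + t\mathbf{b}$ and inverting the Gram system $(\mathbf{a}\cdot\mathbf{x},\mathbf{b}\cdot\mathbf{x}) = (u,v)$ by means of $G$, a short computation based on the identity $\|\mathbf{x}\|^2 = su+tv$ gives
\[
\|\mathbf{x}\|^2 = \frac{\mathbf{b}^2 u^2 - 2(\mathbf{a}\cdot\mathbf{b})\,uv + \mathbf{a}^2 v^2}{\Delta}.
\]
Evaluating at $(u,v) = (\pm H_1,\pm H_2)$ and choosing the signs that make the mixed term positive, the largest vertex norm equals $\sqrt{\bigl(\mathbf{b}^2H_1^2 + 2|\mathbf{a}\cdot\mathbf{b}|H_1H_2 + \mathbf{a}^2H_2^2\bigr)/\Delta}$, and this is exactly the quantity bounded in the statement.

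Finally the two stated estimates follow from this expression: discarding the nonnegative term $2|\mathbf{a}\cdot\mathbf{b}|H_1H_2$ gives the lower bound $\sqrt{\mathbf{b}^2H_1^2 + \mathbf{a}^2H_2^2}/\sqrt{\Delta}$, whereas the Cauchy--Schwarz inequality $|\mathbf{a}\cdot\mathbf{b}|\le\|\mathbf{a}\|\,\|\mathbf{b}\|$ together with the identity $\mathbf{b}^2H_1^2 + 2\|\mathbf{a}\|\|\mathbf{b}\|H_1H_2 + \mathbf{a}^2H_2^2 = (\|\mathbf{b}\|H_1 + \|\mathbf{a}\|H_2)^2$ gives the upper bound $(\|\mathbf{b}\|H_1 + \|\mathbf{a}\|H_2)/\sqrt{\Delta}$. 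The one genuinely load-bearing step is the area computation, namely recognising that the Jacobian of the change of variables $\Phi$ equals the square root of the Gram determinant $\Delta$; once that identity is in place, the exact area formula and both diameter estimates drop out of Cauchy--Schwarz and the trivial bound on the mixed term.
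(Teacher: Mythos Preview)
Your proof is correct and follows essentially the same idea as the paper's: both identify $\mathcal{S}$ as a parallelogram in the plane $\operatorname{span}(\mathbf{a},\mathbf{b})$ and read off its area and diagonal. The paper phrases this trigonometrically (strip widths $h_i=2H_i/\|\cdot\|$ and the angle $\alpha$ between the strips, using $S=h_1h_2/\sin\alpha$ and the law of cosines), whereas you phrase the same computation linear-algebraically via the Gram matrix and the Jacobian $|\det M|=\sqrt{\Delta}$; these are the same calculation in different clothing.

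One small remark: what you compute at the end is the largest distance from the centre to a vertex, i.e.\ \emph{half} the diameter of the parallelogram, and you then identify this with $\operatorname{diam}\mathcal{S}$. The paper's own proof and statement carry the same factor-of-two looseness (substituting $h_1,h_2,\sin\alpha$ into its diagonal formula gives twice the stated bounds), and since the lemma is only ever applied to obtain $\asymp_n$ estimates this is immaterial; but if you want the inequalities exactly as written, note that you are really bounding $\tfrac12\operatorname{diam}\mathcal{S}$.
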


\begin{proof}
Let a parallelogram be formed by intersection two strips with widths $h_1$ and $h_2$ and an angle $\alpha$ between them. Then its area equals to
\[
S = \frac{h_1 h_2}{\sin\alpha},
\]
and its diameter is equal to
\[
d = \frac{\sqrt{h_1^2 + h_2^2 - 2 h_1 h_2 \cos \alpha}}{\sin \alpha}.
\]
For the section $\mathcal{S}$, we have
\[
h_1 = \frac{2H_1}{\|\mathbf{a}\|}, \quad
h_2 = \frac{2H_2}{\|\mathbf{b}\|}, \quad
\sin \alpha = \frac{\sqrt{\|\mathbf{a}\|^2 \|\mathbf{b}\|^2 - (\mathbf{a}\cdot \mathbf{b})^2}}{\|\mathbf{a}\| \|\mathbf{b}\|},
\]
hence, we obtain the lemma.
\end{proof}

\section{The proof of the main theorem}\label{sec-mainproof}

Let $I = [\alpha, \beta)$ be a finite interval. Denote by $\mathcal{N}_n(Q,k,I)$ the number of irreducible integer monic polynomials of degree $n$ and height at most $Q$ having exactly $k$ roots in~$I$. It is easy to see that
\begin{equation}\label{eq-Omega-N}
\Omega_n(Q,I) = \sum_{k=1}^n k\, \mathcal{N}_n(Q,k,I).
\end{equation}
Let $\mathcal{G}_n(\xi,k,S)$ be the set of real polynomials of degree $n$ and height at most~1 with the leading coefficient $\xi$ having exactly $k$ roots (with respect to multiplicity) in a set $S$.
From Lemmas \ref{lm-red-m-pol} and \ref{lm-int-p-num}, we have
\begin{equation}\label{eq-N-G}
\mathcal{N}_n(Q,k,I) = Q^n \mes_n \mathcal{G}_n(Q^{-1},k,I) + O\!\left(Q^{n-1} (\ln Q)^{\delta(n)}\right),
\end{equation}
where in the big-O-notation the implicit constant depends only on $n$.

It is easy to see that for all $0<\xi\le 1$ the function
\begin{equation}\label{eq-Omega-G}
\widehat{\Omega}_n(\xi, S):= \sum_{k=1}^n k \mes_n \mathcal{G}_n(\xi,k,S)
\end{equation}
is additive and bounded on the set of all subsets $S\subseteq \mathbb{R}$.

Let us show that $\widehat{\Omega}_n(\xi, I)$ can be written as the integral of a continuous function over~$I$.
Let
\begin{equation*}
\mathcal{B}(\xi, I) = \left\{{\bf p}\in\mathbb{R}^{n+1} : \deg(p(x)-\xi x^n) < n, \  p(\alpha)p(\beta)< 0, \ H(p)\le 1 \right\},
\end{equation*}
where ${\bf p} = (\xi, p_{n-1},\ldots,p_1,p_0)$ is the vector of the coefficients of the polynomial $p(x)= \xi x^n+\ldots+p_1 x+p_0$, and $\xi = Q^{-1}$.
Obviously, every polynomial from $\mathcal{B}(\xi,I)$ has the odd number of roots in the interval~$I$.

In Lemma \ref{lm-2roots}, we have $\mathcal{M}_n(\xi, I) = \bigcup_{k=2}^n\mathcal{G}_n(\xi, k, I)$. Hence, it follows that
\begin{equation}\label{eq-Omega-D}
\widehat{\Omega}_n(\xi, I) = \mes_n \mathcal{B}(\xi, I) + O(|I|^3),
\end{equation}
where in the big-O-notation the implicit constant depends only on $n$.

Now we calculate
\[
\mes_n \mathcal{B}(\xi, I) = \int\limits_{\mathcal{B}(\xi, I)} dp_0\,dp_1 \dots dp_{n-1}.
\]
The region $\mathcal{B}(\xi, I)$ can be defined by the following inequalities
\begin{equation}\label{eq-sys}
\begin{cases}
\max\limits_{0\le k\le n-1} |p_k| \le 1,\\
f_*(p_1,\dots,p_{n-1},\xi) \le p_0 \le f^*(p_1,\dots,p_{n-1},\xi),
\end{cases}
\end{equation}
where
\begin{align*}
f_*(p_1, \dots, p_{n-1}, p_n) :=& \min\left\{-\sum_{k=1}^n p_k\alpha^k,\  -\sum_{k=1}^n p_k\beta^k \right\},\\
f^*(p_1, \dots, p_{n-1}, p_n) :=& \max\left\{-\sum_{k=1}^n p_k\alpha^k,\  -\sum_{k=1}^n p_k\beta^k \right\}.
\end{align*}

Define
\begin{equation*}
h(\xi,p_{n-1},\ldots,p_1):= f^*(p_1,\dots,p_{n-1},\xi) - f_*(p_1,\dots,p_{n-1},\xi)
\end{equation*}
and consider the regions
\[
D_* := D_n(\xi, \alpha)\cap D_n(\xi, \beta), \qquad
D^* := D_n(\xi, \alpha)\cup D_n(\xi, \beta).
\]
where
\[
D_n(\xi, t) := \left\{(p_1,\dots,p_{n-1})\in\mathbb{R}^{n-1} : \max_{1\le i\le n-1} |p_i|\le 1, \ \left|\xi t^n + \sum_{k=1}^{n-1} p_k t^k \right| \le 1\right\}.
\]
The inequalities $|f_*| \le 1$ and $|f^*| \le 1$ hold for all $(p_1,\dots,p_{n-1})\in D_*$. For any $(p_1,\dots,p_{n-1})\not\in D^*$, the inequalities $|f_*| > 1$ and $|f^*| > 1$ hold simultaneously, and so for sufficiently close $\alpha$ and $\beta$ the system of inequalities \eqref{eq-sys} is contradictory. Otherwise, the inequalities $f_* < -1$ and $f^* > 1$ would hold, and thus we would have $h(\xi, p_{n-1},\dots,p_1) > 2$. However the function $h$ uniformly tends to zero for all $(p_1,\dots,p_{n-1})\in[-1,1]^{n-1}$ as $\alpha$ and $\beta$ converge.

Therefore,
\begin{multline*}
\int\limits_{D_*} h(\xi,p_{n-1},\ldots,p_1)\,dp_{n-1}\ldots dp_1 \le \\
\le \ \mes_n\mathcal{B}(\xi, I) \ \le \\
\le \int\limits_{D^*} h(\xi,p_{n-1}\ldots,p_1)\,dp_{n-1}\ldots dp_1.
\end{multline*}
Hence, it follows that
\[
\left|\mes_n\mathcal{B}(\xi, I) - \int\limits_{D_n(\xi, \alpha)} h(\xi,p_{n-1},\ldots,p_1)\,dp_{n-1}\ldots dp_1 \right| \le 
\int\limits_{D^*\setminus D_*} h(\xi,p_{n-1}\ldots,p_1)\,dp_{n-1}\ldots dp_1.
\]
It is easy to show that the difference of $D^*$ and $D_*$ has a small measure for sufficiently close $\alpha$ and $\beta$:
\[
\mes_{n-1} (D^* \setminus D_*) = O(\beta-\alpha).
\]
Thus, as $\beta\to\alpha$, we obtain for all $\alpha\in\mathbb{R}$
\begin{equation*}
\mes_n \mathcal{B}(\xi, I)=\omega_n(\xi,\alpha)|I|+o(|I|),
\end{equation*}
where $\omega_n(\xi,t)$ is defined in \eqref{eq-omega-def}.

Hence, as $|I|\to 0$, from \eqref{eq-Omega-D} we obtain that
\[
\widehat{\Omega}_n(\xi,I) = \omega_n(\xi,\alpha)|I|+o(|I|),
\]
So we have
\[
\widehat{\Omega}_n(\xi,I)  = \int_I\omega_n(\xi,t)\,dt.
\]
Therefore, from \eqref{eq-Omega-N}, \eqref{eq-N-G} and \eqref{eq-Omega-G}, we obtain the main theorem. Lemma \ref{lm-empty-int} shows that there exist infinitely many intervals $I$, for which the error of the asymptotic formula \eqref{eq-main} is of the order $O(Q^{n-1})$.

\section{About Theorem \ref{thm-limit}}\label{sec-limit}

\subsection{Quadratic algebraic integers}

In this subsection, we consider the desity function $\omega_2(\xi,t)$ of quadratic algebraic integers individually using its expression in elementary functions.
\begin{theorem}[\cite{Koleda2016}]\label{thm-omega2}
For $\xi\le 1/4$,
\begin{equation*}
\omega_2(\xi, t) = \begin{cases}
1 + 4 \xi^2 t^2, & |t| \le t_1,\\
\frac{1}{2t^2} + \frac12 + \xi (1-2|t|) + \frac52 \xi^2 t^2, & t_1 < |t| \le t_2,\\
\frac{1}{t^2} + \xi^2 t^2, & t_2 < |t| \le t_3,\\
2\xi, & t_3 < |t| \le t_4,\\
\frac{1}{2t^2} - \frac12 + \xi (1+2|t|) - \frac32 \xi^2 t^2, &  t_4 < |t| \le t_5,\\
0, & |t| > t_5.
\end{cases}
\end{equation*}
Here
\begin{align*}
&t_1 = t_1(\xi) = \frac{-1+\sqrt{1+4\xi}}{2\xi}, \quad t_2 = t_2(\xi) = \frac{1-\sqrt{1-4\xi}}{2\xi}, \quad t_3 = t_3(\xi) = \frac{1}{\sqrt{\xi}}, \\
&t_4 = t_4(\xi) = \frac{1+\sqrt{1-4\xi}}{2\xi}, \quad t_5 = t_5(\xi) = \frac{1+\sqrt{1+4\xi}}{2\xi}.
\end{align*}
\end{theorem}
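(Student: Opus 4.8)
The plan is to evaluate the defining integral \eqref{eq-omega-def} directly: for $n=2$ it is a single one-dimensional integral, so everything reduces to elementary calculus. Specializing \eqref{eq-omega-def} gives
\[
\omega_2(\xi,t) = \int\limits_{D_2(\xi,t)} \left|2\xi t + p_1\right|\,dp_1, \qquad
D_2(\xi,t) = \left\{ p_1\in\mathbb{R} : |p_1|\le 1,\ \left|\xi t^2 + p_1 t\right|\le 1 \right\}.
\]
Before computing, I would record the symmetry $\omega_2(\xi,t)=\omega_2(\xi,-t)$: the change of variables $p_1\mapsto -p_1$ sends $D_2(\xi,-t)$ onto $D_2(\xi,t)$ and preserves the absolute value in the integrand. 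This lets me assume $t>0$ and write $|t|$ in the final formula.

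For $t>0$ the set $D_2(\xi,t)$ is the interval $[\ell(t),u(t)]$ with $\ell(t)=\max\{-1,\,-t^{-1}-\xi t\}$ and $u(t)=\min\{1,\,t^{-1}-\xi t\}$. The next step is to locate the values of $t$ where the active constraint changes or the interval degenerates. The upper bound switches from $1$ to $t^{-1}-\xi t$ when $\xi t^2+t-1=0$, which gives $t_1$; the lower bound switches between $-1$ and $-t^{-1}-\xi t$ when $\xi t^2-t+1=0$, which gives $t_2$ and $t_4$ (these roots are real and distinct precisely because $\xi\le 1/4$ — this is where the hypothesis is used); and the interval becomes empty, $u(t)<\ell(t)$, once $\xi t^2-t-1>0$, i.e. beyond $t_5$. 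I would then check the ordering $t_1<t_2<t_3<t_4<t_5$.

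The integrand $|2\xi t+p_1|$ has a single corner, at $p_1=-2\xi t$, and the remaining threshold $t_3=\xi^{-1/2}$ is exactly the value at which this corner meets the lower endpoint: solving $-2\xi t=-t^{-1}-\xi t$ yields $\xi t^2=1$. Hence for $t<t_3$ the corner lies in the interior of $[\ell(t),u(t)]$ and the integrand changes sign, while for $t>t_3$ one has $2\xi t+p_1\ge 0$ on the whole interval. On each of the six resulting subintervals the evaluation of $\omega_2$ becomes the integral of a linear function — split at the corner when it is interior, using $\int_a^b|u|\,du=\tfrac12(a^2+b^2)$ for $a<0<b$ — over $[\ell(t),u(t)]$; substituting the explicit endpoints produces the six cases of the statement, with the last case $|t|>t_5$ giving the empty domain and hence $\omega_2=0$.

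The only real difficulty is bookkeeping, and the point demanding care is the consistency of two independent subdivisions: the one coming from which linear constraints are active (thresholds $t_1,t_2,t_4,t_5$) and the one coming from whether the corner $-2\xi t$ is interior (threshold $t_3$). I would verify that $t_3$ lies strictly between $t_2$ and $t_4$, so that at $t_3$ the relevant lower endpoint is $-t^{-1}-\xi t$ rather than $-1$, and that the corner never reaches the upper endpoint $t^{-1}-\xi t$ for $t>0$; then on each of the six pieces both the endpoints and the sign of $2\xi t+p_1$ are unambiguous. Continuity of the piecewise formula at each $t_i$ serves as a convenient check on the algebra.
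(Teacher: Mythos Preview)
Your proposal is correct: the direct evaluation of the one-dimensional integral, with the case analysis governed by the thresholds $t_1,\dots,t_5$, is exactly the right approach, and each of the six formulas does drop out of the computation you describe (I checked the algebra in all cases). Note that the present paper does not actually prove this theorem --- it is quoted from \cite{Koleda2016} --- so there is no ``paper's own proof'' to compare against here; your elementary computation is presumably what the cited reference does as well. One small point worth making explicit when you write it up: at the endpoint $\xi=1/4$ the thresholds $t_2,t_3,t_4$ coalesce, so the third and fourth cases degenerate to empty intervals, but the remaining pieces still match up continuously.
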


Note that in \cite{Koleda2016} the expression (12) for $\omega_2(\xi,t)$ contains a typo: in the first case $\xi$~should be squared. Here we use the correct version.

From the general formula \eqref{eq-phi-def} we have
$\phi_1(t)=\frac{1}{\max(1,t^2)}$.
From Theorem~\ref{thm-omega2} one can see, the difference $|\omega_2(\xi, t)-\phi_1(t)|$ has the order $O(\xi)$ in a neighborhood of $t=1$ as $\xi\to 0$.
Whereas when $n\ge 3$ the estimate \eqref{eq-omega-psi} gives the magnitude $O(\xi^2)$ for $|\omega_n(\xi, t)-\phi_{n-1}(t)|$ in the same neighborhood. For all other $t$, except the two intervals $|t|\in (t_1,t_2)$, the general bound \eqref{eq-omega-psi} can be applied to the quadratic case too.

However, despite the uniform convergence of $\omega_2(\xi, t)$ to $\phi_1(t)$ as $\xi\to 0$,
the following theorem can be proved.
\begin{theorem}[\cite{Koleda2016}]
For $\xi\le 1/4$
\begin{equation*}
\int\limits_{-\infty}^{+\infty} \left(\omega_2(\xi, t) - \phi_1(t)\right) dt = 4 - \frac{16}{3} \sqrt{\xi} + O(\xi),
\end{equation*}
where the implicit constant in the big-O-notation is absolute.
\end{theorem}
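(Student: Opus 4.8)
The plan is to reduce everything to an explicit elementary computation, since Theorem~\ref{thm-omega2} gives $\omega_2(\xi,t)$ in closed form and $\phi_1(t)=1/\max(1,t^2)$ is elementary as well. Both functions are even, so I would first write
\[
\int_{-\infty}^{+\infty}\bigl(\omega_2(\xi,t)-\phi_1(t)\bigr)\,dt = 2\int_0^{+\infty}\bigl(\omega_2(\xi,t)-\phi_1(t)\bigr)\,dt,
\]
and then partition $[0,+\infty)$ into the intervals cut out by the breakpoints $t_1<t_2<t_3<t_4<t_5$ of Theorem~\ref{thm-omega2}, inserting in addition the point $t=1$ where $\phi_1$ switches from $1$ to $t^{-2}$. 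For $\xi\le 1/4$ one has $t_1<1<t_2$, so the only subinterval on which $\phi_1$ is not a single elementary expression is $[t_1,t_2]$, which I would split at $1$.

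On each resulting piece the integrand $\omega_2(\xi,t)-\phi_1(t)$ is a polynomial in $t$ plus a multiple of $t^{-2}$, hence integrates in closed form to elementary functions of the endpoints. The remaining work is to substitute the explicit values of the $t_i$ and expand, using $\sqrt{1\pm 4\xi}=1\pm 2\xi-2\xi^2+O(\xi^3)$ to obtain $t_1=1-\xi+O(\xi^2)$, $t_2=1+\xi+O(\xi^2)$, $t_4=\xi^{-1}-1+O(\xi)$, $t_5=\xi^{-1}+1+O(\xi)$, together with the exact value $t_3=\xi^{-1/2}$. A short check shows that the contributions of $[0,t_1]$ and $[t_1,t_2]$ are $O(\xi^2)$ and those of $[t_4,t_5]$ and $[t_5,+\infty)$ are $O(\xi)$, while the constant $4$ and the half-integer term both come from the two middle pieces, through $\int_{t_2}^{t_3}\xi^2t^2\,dt=\tfrac13\sqrt{\xi}+O(\xi^2)$ and $\int_{t_3}^{t_4}(2\xi-t^{-2})\,dt=2-3\sqrt{\xi}+O(\xi)$. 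Summing the six pieces and multiplying by $2$ should give $4-\tfrac{16}{3}\sqrt{\xi}+O(\xi)$.

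The main obstacle is not any single integral — each is routine — but the careful bookkeeping of the powers of $\xi$, in particular the appearance of $\sqrt{\xi}$. This half-integer power is produced entirely at the scale $t\asymp t_3=\xi^{-1/2}$: the excess $\xi^2t^2$ of $\omega_2$ over $\phi_1$ on $[t_2,t_3]$ and the flat value $2\xi$ of $\omega_2$ on $[t_3,t_4]$ integrate over ranges of length $\asymp\xi^{-1/2}$ and $\asymp\xi^{-1}$, and it is the combination $\tfrac13-3=-\tfrac83$ (before the factor $2$) that yields the coefficient $-\tfrac{16}{3}$. One must therefore keep $t_3=\xi^{-1/2}$ exactly, expand $t_4,t_5$ through order $\xi$, and track the $\xi$-order contributions of all pieces to confirm that the remaining error is genuinely $O(\xi)$. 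As a consistency check, the constant term $4=2^2$ agrees with Theorem~\ref{thm-idiff} for $n=2$, and the implicit constant is absolute because every estimate involved carries only numerical coefficients.
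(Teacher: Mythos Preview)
The paper does not prove this theorem; it is quoted from \cite{Koleda2016} and merely stated here to illustrate the $n=2$ case of Theorem~\ref{thm-idiff}. Your approach---using the explicit piecewise formula of Theorem~\ref{thm-omega2} together with $\phi_1(t)=1/\max(1,t^2)$, splitting $[0,\infty)$ at $t_1,\dots,t_5$ and at $t=1$, and expanding the elementary integrals in $\xi$---is correct and is exactly the natural computation once Theorem~\ref{thm-omega2} is available. Your asymptotics for the $t_i$ and your identification of the two pieces on $[t_2,t_3]$ and $[t_3,t_4]$ as the sole sources of the constant $2$ and the $\sqrt{\xi}$ term (combining as $\tfrac13-3=-\tfrac83$ before doubling) are accurate, and the remaining pieces are each $O(\xi)$ as you say.
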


\subsection{The proof of the limit equation for higher degrees}

Let $n\ge 3$. In this subsection, we assume $t\ge 0$ without loss of generality because the function $\omega_n(\xi,t)$ is even.

For the sake of simplification, introduce the following notation
\begin{align*}
\mathbf{p} &:= (p_1,\dots,p_{n-1}), \quad d\mathbf{p} := dp_1\,dp_2\dots dp_{n-1},\\
\mathbf{w}(t) &:= \left(t, t^2, \dots, t^{n-1}\right), \\
\mathbf{v}(t) &:= \left(1, 2t, \dots, (n-1)t^{n-2}\right) = \frac{d}{dt} \mathbf{w}(t).
\end{align*}

In this notation, the function $\phi_{n-1}(t)$ takes the form:
\[
\phi_{n-1}(t) = \int\limits_{G_{n-1}(t)} \left|\mathbf{v}(t)\cdot \mathbf{p}\right|\,d\mathbf{p}, \qquad t \in \mathbb{R},
\]
where
\begin{equation}\label{eq-G-def'}
G_{n-1}(t) = \left\{\mathbf{p} \in \mathbb{R}^{n-1} : \|\mathbf{p}\|_\infty \le 1, \ |\mathbf{w}(t)\cdot \mathbf{p}| \le 1 \right\}.
\end{equation}
Note that $G_{n-1}(t)$ differs from $D_n(\xi,t)$ in the absence of term $\xi t^n$ in the modulus brackets.

Change the variables in the integral for $\omega_n(\xi,t)$:
\[
\begin{cases}
p_i = q_i, & i=1,\dots,n-2,\\
p_{n-1} = q_{n-1} - \xi t.
\end{cases}
\]
Then the integral takes the form:
\[
\omega_n(\xi,t) = \int\limits_{S_n(\xi,t)} \left|\xi t^{n-1} + \sum_{k=1}^{n-1} k q_k t^{k-1}\right|\,dq_1\ldots\,dq_{n-1}, \qquad t \in \mathbb{R},
\]
where
\[
S_n(\xi,t) = \left\{(q_1,\ldots,q_{n-1}) \in \mathbb{R}^{n-1} :
\max\limits_{1\le i \le n-2}|q_i| \le 1, \
\begin{array}{l}
|q_{n-1}-\xi t| \le 1, \\
|q_{n-1} t^{n-1} + \ldots + q_1 t| \le 1
\end{array}
\right\}.
\]
Now, we partition the integral into the three summands
\begin{multline*}
\omega_n(\xi,t) = \int\limits_{G_{n-1}(t)} \left|\xi t^{n-1} + \mathbf{v}(t) \mathbf{q}\right|\,d\mathbf{q} \ +\\
+
\int\limits_{S_n^+(\xi,t)} \left|\xi t^{n-1} + \mathbf{v}(t) \mathbf{q}\right|\,d\mathbf{q} \  -
\int\limits_{S_n^-(\xi,t)} \left|\xi t^{n-1} + \mathbf{v}(t) \mathbf{q}\right|\,d\mathbf{q},
\end{multline*}
where $G_n(t)$ is defined in \eqref{eq-G-def}, and
\begin{align*}
S_n^+(\xi,t) &= \left\{ \mathbf{q} \in \mathbb{R}^{n-1} :
|\mathbf{w}(t) \mathbf{q}| \le 1, \ 
\max\limits_{1\le i \le n-2}|q_i| \le 1, \
\phantom{-}1 < q_{n-1} \le 1+\xi t
\right\},\\
S_n^-(\xi,t) &= \left\{ \mathbf{q} \in \mathbb{R}^{n-1} :
|\mathbf{w}(t) \mathbf{q}| \le 1, \
\max\limits_{1\le i \le n-2}|q_i| \le 1, \
-1 \le q_{n-1} < - 1+\xi t
\right\}.
\end{align*}
For convenience, denote the integral over $G_{n-1}(t)$ by $J_1$, that one over $S_n^+(\xi,t)$ by $J_2$, and that one over $S_n^-(\xi,t)$ by $J_3$. So
\[
\omega_n(\xi,t) = J_1 + J_2 - J_3.
\]

1) Estimation of the difference $J_2 - J_3$.

For $S_n(\xi,t)$ we have
\begin{equation}\label{eq-S-ineq}
\begin{cases}
-1+\xi t \le q_{n-1} \le 1+\xi t,\\
\frac{-1-q_1 t - \dots - q_{n-2} t^{n-2}}{t^{n-1}} \le q_{n-1} \le \frac{1-q_1 t - \dots - q_{n-2} t^{n-2}}{t^{n-1}}.
\end{cases}
\end{equation}
Obviously
\[
\min\limits_{\substack{|q_i|\le 1\\1\le i\le n-2}} \frac{-1-q_1 t - \dots - q_{n-2} t^{n-2}}{t^{n-1}} = -\sum_{k=1}^{n-1} t^{-k}.
\]
Therefore, the restriction $|q_{n-1} - \xi t|\le 1$ has no effect if $t$ satisfies the inequality
\begin{equation}\label{eq-t-cond}
-\sum_{k=1}^{n-1} t^{-k} \ge -1+\xi t.
\end{equation}

It is easily seen that for $0< \xi <1/8$, the inequality \eqref{eq-t-cond} defines an interval $[t_1, t_2]$ in the positive semiaxis, where $t_1$ and $t_2$ are the two positive roots of the equation
\begin{equation}\label{eq-t-eq}
1-\sum_{k=1}^{n-1} t^{-k} = \xi t.
\end{equation}
As $\xi$ tends to zero, these roots satisfy the asymptotics
\[
t_1 = t_1^{(0)} + O(\xi), \qquad t_2 = \xi^{-1} + O(1),
\]
where $t_1^{(0)}$ is the only positive root of \eqref{eq-t-eq} when $\xi=0$; the implicit big-O-constants depend on $n$ only.
Note that $t_2<\xi^{-1}$.

Consider separately the three intervals: $[0,t_1)$, $[t_1,t_2)$, $[t_2, +\infty)$.

a) Let $0\le t < t_1$.

In the integral $J_2$, make the change $q_{n-1}=1+\theta$. In the integral $J_3$, we make the change $\mathbf{q} \to -\mathbf{q}$ and next the change $q_{n-1} = 1-\theta$. Here $0<\theta<\xi t$.
Thus, we obtain
\begin{align*}
J_2 &= \int\limits_{\widetilde{S}_n^+(\xi,t)} \left|\xi t^{n-1} + (n-1)(1+\theta)t^{n-2} + \sum_{k=1}^{n-2} k q_k t^{k-1} \right|dq_1\,dq_2 \dots dq_{n-2}\,d\theta,\\
J_3 &= \int\limits_{\widetilde{S}_n^-(\xi,t)} \left|-\xi t^{n-1} + (n-1)(1-\theta)t^{n-2} + \sum_{k=1}^{n-2} k q_k t^{k-1} \right|dq_1\,dq_2 \dots dq_{n-2}\,d\theta,
\end{align*}
where
\begin{align*}
\widetilde{S}_n^+(\xi,t) &= \left\{ (q_1,\dots,q_{n-2},\theta) \in [-1,1]^{n-2}\times[0,\xi t] \ : \
\left|\sum_{k=1}^{n-2} q_k t^k + (1+\theta)t^{n-1}\right| \le 1
\right\},\\
\widetilde{S}_n^-(\xi,t) &= \left\{ (q_1,\dots,q_{n-2},\theta) \in [-1,1]^{n-2}\times[0,\xi t] \ : \
\left|\sum_{k=1}^{n-2} q_k t^k + (1-\theta)t^{n-1}\right| \le 1
\right\}.
\end{align*}

The measures of the symmetric difference and the intersection of $\widetilde{S}_n^-(\xi,t)$ and $\widetilde{S}_n^+(\xi,t)$ satisfy the inequalities
\begin{align*}
\mes_{n-1} \left(\widetilde{S}_n^-(\xi,t) \operatorname{\Delta} \widetilde{S}_n^+(\xi,t)\right) & \ll_n \xi^2 t^n,\\
\mes_{n-1} \left(\widetilde{S}_n^-(\xi,t) \cap \widetilde{S}_n^+(\xi,t)\right) & \ll_n \xi t.
\end{align*}
The integrands are bounded in the domains of integration.
The difference of the integrands in $\widetilde{S}_n^-(\xi,t) \cap \widetilde{S}_n^+(\xi,t)$ is at most $2n\xi t^{n-1}$. Therefore, for $t\in [0, t_1)$ we have
\[
|J_2 - J_3| \ll_n \xi^2 t^n.
\]

b) For $t\in [t_1,t_2)$ we have $S_n(\xi,t) = G_{n-1}(t)$, and so
\[
J_2 = J_3 = 0.
\]

c) For $t\ge t_2$, we have
\[
J_2 = 0, \quad 0\le J_3 \le J_1. 
\]
In this interval, the integral $J_3$ can be estimated as follows
\[
J_3 \ \le \ \left(\xi t^{n-1} + \sum_{k=1}^{n-1} k t^{k-1}\right) \mes_{n-1} S_n^-(\xi,t) \ \ll_n \ \xi + \frac{1}{t} \ \ll_n \ \xi.
\]
Looking at the system \eqref{eq-S-ineq} one can observe that $J_1 = J_3$ and so $\omega_n(\xi,t) = 0$, if
\[
\sum_{k=1}^{n-1} t^{-k} \le -1 + \xi t.
\]
This happens as $t \ge t_3$, where $t_3$ is the positive root of the equation
\[
1+\sum_{k=1}^{n-1} t^{-k} = \xi t,
\]
and can be estimated as
\[
\frac{1}{\xi} < t_3 < \frac{1}{\xi} + 1.
\]

2) Estimation of the difference $J_1 - \phi_{n-1}(t)$.

From Lemma \ref{lm-diff}, we have
\[
0 < J_1 - \phi_{n-1}(t) \le \xi t^{n-1} \mes_{n-1} U_n(\xi,t),
\]
where
\begin{equation}\label{eq-U-def}
U_n(\xi,t) = \left\{\mathbf{q}\in\mathbb{R}^{n-1} : \|\mathbf{q}\|_\infty \le 1, \ |\mathbf{w}(t) \mathbf{q}|\le 1, \ |\mathbf{v}(t) \mathbf{q}| \le \xi t^{n-1}\right\}.
\end{equation}
Denote by $\mathcal{S}$ the section of the <<stick>> $\left\{\mathbf{q}\in\mathbb{R}^{n-1}: |\mathbf{w}(t) \mathbf{q}|\le 1, \ |\mathbf{v}(t) \mathbf{q}| \le \xi t^{n-1} \right\}$ by the linear span of the vectors $\mathbf{w}(t)$ and $\mathbf{v}(t)$.

Using the identity
\begin{equation*}
\left(\sum_{i=1}^k a_i^2 \right) \left(\sum_{i=1}^k b_i^2 \right) - \left(\sum_{i=1}^k a_i b_i \right)^2 = \sum_{1\le i < j \le k} (a_i b_j - a_j b_i)^2,
\end{equation*}
we have
\[
\mathbf{w}(t)^2 \mathbf{v}(t)^2 - (\mathbf{w}(t) \mathbf{v}(t))^2 \ge t^{4(n-2)} + t^4 \ge
\left(\frac{t^{2(n-2)} + t^2}{2}\right)^2.
\]
From Lemma \ref{lm-mes-diam}, the diameter of the section can be estimated as
\[
\operatorname{diam} \mathcal{S} \asymp_n \frac{(t^{n-2}+1) + \xi t^{n-1} (t^{n-1}+t)}{t^{2(n-2)} + t^2}.
\]

a) For $0\le t\le 1/2$ the condition $|\mathbf{w}(t)\mathbf{q}|\le 1$ holds automatically and so is redundant. Thus, we have the estimate
\[
\mes_{n-1} U_n(\xi, t) \ll_n \xi t^{n-1}.
\]

b) Let $1/2 < t\le \kappa_1(n)/\sqrt{\xi}$. Here the upper bound for $t$ is determined by the condition that the diameter of the section does not exceed the diameter of the $n$-dimensional cube $[-1,1]^n$:
\[
\operatorname{diam} \mathcal{S} \ll_n 1.
\]
From Lemma \ref{lm-mes-diam}, we obtain
\[
\mes_{n-1} U_n(\xi,t) \ll_n \frac{\xi t^{n-3}}{t^{2(n-3)} + 1}.
\]

c) For $t > \kappa_1(n)/\sqrt{\xi}$, the estimate by Lemma \ref{lm-mes-diam} is not effective. So we esimate the measure in another way
\[
\mes_{n-1} U_n(\xi,t) \le \mes_{n-1}\left\{\mathbf{q}\in\mathbb{R}^{n-1} : \|\mathbf{q}\|_\infty \le 1, \ |\mathbf{w}(t) \mathbf{q}|\le 1 \right\} \ll_n t^{-n+1}.
\]

Gathering all the cases, we write
\[
|J_1 - \phi_{n-1}(t)| \ll_n \begin{cases}
\xi^2 t^{2(n-1)}, & 0\le t \le 1/2,\\
\xi^2 t^2, & 1/2 < t \le \frac{\kappa_1(n)}{\sqrt{\xi}},\\
\xi, & \frac{\kappa_1(n)}{\sqrt{\xi}} < t.
\end{cases}
\]
Since $\omega_n(\xi,t)=0$ for $t\ge \xi^{-1} + \xi$, we have
\[
|\omega_n(\xi, t) - \phi_{n-1}(t)| \ll_n \begin{cases}
\xi^2 t^2, & |t|\le \kappa_1(n) \xi^{-1/2},\\
\xi, & \kappa_1(n) \xi^{-1/2} < |t| \le \kappa_2(n)\xi^{-1},\\
t^{-2}, & \kappa_2(n)\xi^{-1} < |t|.
\end{cases}
\]
Theorem \ref{thm-limit} is proved.

\subsection{Proving Theorem \ref{thm-idiff}}

In this subsection we continue using the notation from the previous subsection.
Recall that
\[
J_1 = \int\limits_{G_{n-1}(t)} \left|\xi t^{n-1} + \mathbf{v}(t) \mathbf{q}\right|\,d\mathbf{q},
\]
the regions $G_{n-1}(t)$ and $U_n(\xi,t)$ are defined in \eqref{eq-G-def'} and \eqref{eq-U-def} respectively.

We start with the following fact.
\begin{lemma}\label{lm-I1}
Let $n\ge 3$ be a fixed integer, and $0<\xi<1$. Then for all
\begin{equation}\label{eq-t-restr}
|t|\ge \sqrt{\frac{5(n-1)}{\xi}}
\end{equation}
we have
\[
J_1 = 2^{n-1} \xi.
\]
\end{lemma}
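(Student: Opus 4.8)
The plan is to recognize $J_1$ as precisely the integral to which Lemma~\ref{lm-diff} applies. Writing $J_1 = \int_{G_{n-1}(t)} |\mathbf{v}(t)\cdot\mathbf{q} + \xi t^{n-1}|\,d\mathbf{q}$, I take $V = G_{n-1}(t)$, the fixed vector $\mathbf{v} = \mathbf{v}(t)$, and $\epsilon = \xi t^{n-1}$. The region $G_{n-1}(t)$ is symmetric about the origin, since both defining inequalities $\|\mathbf{q}\|_\infty \le 1$ and $|\mathbf{w}(t)\cdot\mathbf{q}|\le 1$ are even in $\mathbf{q}$, so the hypotheses of the lemma are met. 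The last assertion of Lemma~\ref{lm-diff} then gives $J_1 = \xi t^{n-1}\,\mes_{n-1} G_{n-1}(t)$ \emph{provided} $V(\epsilon)=V$, that is, provided $U_n(\xi,t)$ coincides with $G_{n-1}(t)$ up to a null set. Everything therefore reduces to two tasks: verifying this coincidence under the restriction \eqref{eq-t-restr}, and evaluating $\mes_{n-1}G_{n-1}(t)$.

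For the coincidence I must show $|\mathbf{v}(t)\cdot\mathbf{q}|\le \xi t^{n-1}$ for every $\mathbf{q}\in G_{n-1}(t)$. The key is the algebraic identity obtained by cancelling the top-degree terms:
\[
t\,(\mathbf{v}(t)\cdot\mathbf{q}) = (n-1)\,(\mathbf{w}(t)\cdot\mathbf{q}) + \sum_{k=1}^{n-2}(k-n+1)\,q_k\,t^{k},
\]
which holds because the coefficient of $t^{n-1}$ is $(n-1)-(n-1)=0$. On $G_{n-1}(t)$ the first term is bounded by $n-1$ via $|\mathbf{w}(t)\cdot\mathbf{q}|\le 1$, and the sum is bounded, using $|q_k|\le 1$, by $\sum_{k=1}^{n-2}(n-1-k)t^{k}$, which for $t\ge 2$ does not exceed $4\,t^{n-2}$ (a geometric-type estimate). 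Dividing by $t$ yields $|\mathbf{v}(t)\cdot\mathbf{q}|\le (n-1)t^{-1}+4t^{n-3}\le 5(n-1)t^{n-3}$, and this is $\le \xi t^{n-1}$ exactly when $t^{2}\ge 5(n-1)/\xi$, i.e. under \eqref{eq-t-restr} (which in particular forces $t>3$). The boundary set $|\mathbf{v}(t)\cdot\mathbf{q}|=\xi t^{n-1}$ is a lower-dimensional set, hence null, so the distinction between strict and non-strict inequality is immaterial for the integral.

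For the measure I integrate out $q_{n-1}$ first. For fixed $(q_1,\dots,q_{n-2})\in[-1,1]^{n-2}$, the inequality $|\mathbf{w}(t)\cdot\mathbf{q}|\le 1$ confines $q_{n-1}$ to an interval of length $2t^{-(n-1)}$ centered at $-t^{-(n-1)}\sum_{k=1}^{n-2}q_k t^{k}$; since $t$ is large this centre has modulus at most $(t-1)^{-1}$, so together with the small half-width $t^{-(n-1)}$ the whole interval lies inside $[-1,1]$ and the box constraint $|q_{n-1}|\le 1$ is inactive. Hence $\mes_{n-1}G_{n-1}(t)=2^{n-2}\cdot 2t^{-(n-1)}=2^{n-1}t^{-(n-1)}$, and combining with the previous step gives $J_1 = \xi t^{n-1}\cdot 2^{n-1}t^{-(n-1)} = 2^{n-1}\xi$, as claimed.

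I expect the only delicate point to be the constant in the second step: one must check that the crude bound on $\sum_{k=1}^{n-2}(n-1-k)t^{k}$ is tight enough for the threshold to come out as the clean value $\sqrt{5(n-1)/\xi}$ \emph{uniformly} in $n\ge 3$, rather than as a worse, more strongly $n$-dependent bound. Confirming the inactivity of the $q_{n-1}$-box and dismissing the null-set boundary are routine.
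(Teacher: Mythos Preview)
Your proof is correct and follows essentially the same approach as the paper: invoke Lemma~\ref{lm-diff} with $V=G_{n-1}(t)$ and $\epsilon=\xi t^{n-1}$, verify that $U_n(\xi,t)=G_{n-1}(t)$ under \eqref{eq-t-restr}, and then compute $\mes_{n-1}G_{n-1}(t)=2^{n-1}t^{-(n-1)}$ by integrating out $q_{n-1}$. The only cosmetic difference is in the verification step---the paper normalizes each constraint by its leading coefficient to isolate $q_{n-1}$ (getting $|q_{n-1}|\le 3/t$ and then bounding the normalized $\mathbf{v}$-constraint by $5/t$), whereas you use the identity $t\,\mathbf{v}(t)\cdot\mathbf{q}=(n-1)\,\mathbf{w}(t)\cdot\mathbf{q}+\sum_{k=1}^{n-2}(k-n+1)q_kt^k$; both routes land exactly on the threshold $t^2\ge 5(n-1)/\xi$, so your worry about the constant is unfounded.
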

\begin{proof}
In Lemma \ref{lm-diff}, the region $G_{n-1}(t)$ plays the role of~$V$, and $U_n(\xi,t)$ --- the role of~$V(\epsilon)$.
Now our aim is to determine values of $t$, for which $U_n(\xi,t)$ coincides with $G_{n-1}(t)$. For this sake consider the inequalities defining $G_{n-1}(t)$ and $U_n(\xi,t)$.

The inequality $|\mathbf{w}(t) \mathbf{q}|\le 1$ is equivalent to
\begin{equation}\label{eq-1ineq}
\left|q_{n-1} + \sum_{i=2}^{n-1} \frac{q_{n-i}}{t^{i-1}} \right|\le \frac{1}{t^{n-1}}.
\end{equation}

And the inequality $|\mathbf{v}(t) \mathbf{q}|\le \xi t^{n-1}$ can be rewritten as follows
\begin{equation}\label{eq-2ineq}
\left|q_{n-1} + \sum_{i=2}^{n-1} \frac{(n-i)q_{n-i}}{(n-1)t^{i-1}} \right|\le \frac{\xi t}{n-1}.
\end{equation}

Since $\max_{1\le i\le n-1}|q_i|\le 1$, both the sums in \eqref{eq-1ineq} and \eqref{eq-2ineq} containing $t$ don't exceed $2 t^{-1}$ by absolute value when $|t|\ge 2$.

Roughly estimating $|q_{n-1}|$ from \eqref{eq-1ineq}, we obtain
\begin{equation}\label{eq-q-bound}
|q_{n-1}|\le 3 t^{-1}.
\end{equation}
Putting this in \eqref{eq-2ineq} shows that the left hand side of \eqref{eq-2ineq} doesn't exceed $5 t^{-1}$.
Thus, for $|t|\ge c \xi^{-1/2}$, where $c = \sqrt{5(n-1)}$, the inequality \eqref{eq-2ineq} follows from \eqref{eq-1ineq},
and therefore,
\[
U_n(\xi,t) = G_{n-1}(t).
\]

By Lemma \ref{lm-diff} we have
\[
J_1 = \xi t^{n-1} \mes_{n-1} G_{n-1}(t).
\]

The bound \eqref{eq-q-bound} shows that if $|t|\ge 3$ then $G_{n-1}(t)$ can be written as
\[
G_{n-1}(t) = \left\{\mathbf{q} \in \mathbb{R}^{n-1} : \max_{1\le i\le n-2} |q_i| \le 1, \ 
\text{and \eqref{eq-1ineq} holds}
\right\}.
\]
Obviously,
\[
\mes_{n-1} G_{n-1}(t) = \frac{2^{n-1}}{t^{n-1}}.
\]
Note that if $\xi<1$, the inequality \eqref{eq-t-restr} is the most restrictive of the ones appearing in the proof.
The lemma is proved.
\end{proof}

Now we prove Theorem \ref{thm-idiff}. Consider integration over the three intervals: $[0,c\xi^{-1/2})$, $[c\xi^{-1/2}, t_2)$, $[t_2,+\infty)$,
where $t_2$ is the bigger positive root of \eqref{eq-t-eq}, which has the asymptotics $t_2 = \xi^{-1} + O(1)$ as $\xi\to 0$.

Using Theorem \ref{thm-limit} we find
\[
\int_0^{c \xi^{-1/2}} (\omega_n(\xi,t)-\phi_{n-1}(t))\,dt = O(\xi^{1/2}),
\]
\[
\int_{t_2}^{+\infty} (\omega_n(\xi,t)-\phi_{n-1}(t))\,dt = O(\xi).
\]
About $\phi_n(t)$ it is known \cite{Koleda2014a} that  $\phi_n(t) = O(t^{-2})$ for $t\ge 1$.
Hence, from Lemma \ref{lm-I1} we obtain
\[
\int_{c \xi^{-1/2}}^{t_2} (\omega_n(\xi,t)-\phi_{n-1}(t))\,dt = 2^{n-1} + O(\xi^{1/2}).
\]
Now remembering that both $\omega_n(\xi,t)$ and $\phi_n(t)$ are even functions with respect to $t$, we get Theorem~\ref{thm-idiff}.

\cleardoublepage
\phantomsection
\addcontentsline{toc}{section}{References}

\bibliographystyle{abbrv}
\bibliography{bib4-eng}

\bigskip

{\small\noindent Denis Koleda (Dzianis Kaliada)}\\
{\footnotesize
{Institute of Mathematics, National Academy of Sciences of Belarus,\\
220072 Minsk, Belarus}\\
e-mail: koledad@rambler.ru
}

\end{document}